\documentclass[12pt,oneside,a4paper]{amsart}
%DIF LATEXDIFF DIFFERENCE FILE
%DIF DEL C:\Users\simonk\Dropbox\kutatas\collaborations\ISTVAN_uj\2019\cikk\carpet_Revision_Nonlin_01_03Final.tex   Tue Mar 19 16:20:44 2019
%DIF ADD carpet_FINALRevision_Nonlin_03_18.tex                                                                      Tue Mar 19 16:19:33 2019
\usepackage[british]{babel}

\usepackage{graphicx}
\usepackage{amscd}
\usepackage{amsmath}
\usepackage{amsfonts}
\usepackage{amssymb}
\usepackage{mathrsfs}
\usepackage{comment}
\excludecomment{mycomment}
\usepackage{color}
\usepackage[usenames,dvipsnames,table,xcdraw]{xcolor}
\usepackage{pdfsync}
\usepackage{bbm}
\usepackage{dsfont}
\usepackage[colorlinks=true]{hyperref}
\usepackage{enumerate}
\usepackage{booktabs}
\usepackage{float}
\usepackage{wrapfig}
\usepackage{caption}
\usepackage{subcaption}
\usepackage{longtable}
\usepackage{listings}
\usepackage[T1]{fontenc}
\usepackage[scaled]{beramono}
\usepackage{tikz}
\usepackage{pgffor}
\usepackage[all]{xy}
\usepackage{bbold}
\usepackage{enumerate}
\definecolor{trp}{rgb}{1,1,1}

\definecolor{red}{rgb}{1,0,.2}

\usepackage{amsthm}
\theoremstyle{plain}
\newtheorem{theorem}{Theorem}[section]

\newtheorem{corollary}[theorem]{Corollary}

\newtheorem{definition}[theorem]{Definition}
\newtheorem{example}[theorem]{Example}

\newtheorem{lemma}[theorem]{Lemma}

\newtheorem{method}{Method}

\newtheorem{remark}[theorem]{Remark}

\newtheorem{assumption}{Assumption}
\numberwithin{equation}{section}

\newcommand{\iiv}{\overline{\imath}}
\newcommand{\jjv}{\overline{\jmath}}

\newcommand*{\arabicdec}[1]{\the\numexpr\value{#1}\relax}
%\renewcommand*{\thechapter}{\arabicdec{chapter}}

%\usepackage{refcheck}
%új laptop, rendes A4 méret
%\usepackage{anysize}
%\papersize{20.805cm}{16.7006cm}
%\marginsize{2cm}{1cm}{0cm}{0cm}
\linespread{1.15}

\usepackage{anysize}
%nagy gép otthon a szélessége rendes A4 méret
%\papersize{24.5cm}{16.2006cm}
%\marginsize{1.5cm}{1cm}{0cm}{0cm}
%%%%laptopra
%\papersize{17.6cm}{16.2006cm}zzzzzz
%\marginsize{1cm}{1cm}{1cm}{1cm}

\usepackage{caption}
%\usepackage{subcaption}

%\DeclareSymbolFont{extraup}{U}{zavm}{m}{n}
%\DeclareMathSymbol{\varheart}{\mathalpha}{extraup}{86}
%\DeclareMathSymbol{\vardiamond}{\mathalpha}{extraup}{87}

%\usepackage{soul,xcolor}

\definecolor{blue}{rgb}{0,0,1}

\definecolor{red}{rgb}{1,0,.7}

\usepackage{tikz}
\usetikzlibrary{cd,decorations.pathreplacing,positioning,arrows}

\usepackage{xr}

\begin{document}
\title[Special Families of CPLIFS]{Special families of piecewise linear iterated function systems}

\author{R. D\'aniel Prokaj}
\address{ R. D\'aniel Prokaj, Alfréd Rényi Institute of Mathematics, Reáltanoda u. 13-15.,
H-1053 Budapest, Hungary} \email{prokajrd@math.bme.hu}

\author{K\'aroly Simon}
\address{K\'aroly Simon, Department of Stochastics, Institute of Mathematics,
Budapest University of Technology and Economics, Műegyetem rkp. 3.,
H-1111 Budapest, Hungary, and  ELKH-BME Stochastics Research Group, P.O. Box 91, 1521 Budapest, Hungary}
\email{simonk@math.bme.hu}

\begin{abstract}
This paper investigates the dimension theory of some families of continuous piecewise linear iterated function systems. For one family, we show that the Hausdorff dimension of the attractor is equal to the exponential growth rate obtained from the most natural covering system. We also prove that for Lebesgue typical parameters, the 1-dimensional Lebesgue measure of the underlying attractor is positive, if this number is bigger than 1, and all the contraction ratios are positive.
\end{abstract}

\maketitle

%\newpage

\thispagestyle{empty}
%%%%%%%%%%%%%%%%%%%%%%%%%%%%%%%%%%%%%%%%%%%%%%%%%%%%%%%%%%%%%%%%%%%%%%%%%%%%%%%%

\section{Introduction}

Let $m>0$ and $\mathcal{F}=\left\{ f_k:\mathbb{R}\to \mathbb{R}\right\}_{k=1}^m$ be a finite list of strict contractions over $\mathbb{R}$. We call $\mathcal{F}$ an iterated function system. It is well known that there exists a unique non-empty set $\Lambda\subset \mathbb{R}$ for which
\begin{equation}\label{sf95}
  \Lambda = \bigcup_{k=1}^m f_k(\Lambda).
\end{equation} 
This set $\Lambda$ is called the attractor of $\mathcal{F}$. 

For each $k\in[m]$, we always assume that $f_k$ is a continuous piecewise linear function which is a strict contraction over any interval $I\subset \mathbb{R}$ with non-zero slopes. That is, $f_k$ is a continuous function that has different slopes over finitely many intervals. We call these systems \texttt{Continuous Piecewise Linear Iterated Function Systems} or just \texttt{CPLIFS} for short. We will often refer to a CPLIFS whose functions are injective as an \texttt{injective CPLIFS}.
 
Write $\rho_k$ for the biggest slope of $f_k$ in abolute value, $\forall k\in[m]$.
Let us call a CPLIFS \texttt{small} if $\sum_{k=1}^m \vert\rho_k\vert<1$ and  
\begin{itemize}
    \item $\forall k\in[m]:\vert\rho_k\vert <\frac{1}{2}$, if $\mathcal{F}$ is injective;
    \item $\forall k\in[m]:\vert\rho_k\vert <\frac{1}{3}$, otherwise.
\end{itemize}
In \cite{prokaj2021piecewise}, we showed that if $\mathcal{F}$ is small, then typically the dimensions of the attractor $\Lambda$ are equal. For the definition of these fractal dimensions, we refer the reader to \cite{falconer1997techniques}.
\begin{theorem}[Theorem 2.1 of \cite{prokaj2021piecewise}]\label{sf94}
  Let $\mathcal{F}$ be a $\dim_{\rm P}$-typical small CPLIFS with attractor $\Lambda$. Then
  \begin{equation}\label{sf93}
    \dim_{\rm H}\Lambda = \dim_{\rm B}\Lambda .
  \end{equation}
\end{theorem}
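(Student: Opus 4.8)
The plan is to establish the only non-trivial inequality, $\dim_{\rm H}\Lambda\ge\dim_{\rm B}\Lambda$ (the other one holds for every compact set), by pinning both dimensions to a single combinatorial exponent. First I would define that exponent. For each $k$ split $\mathbb{R}$ into the finitely many maximal intervals on which $f_k$ is affine; then for a word $\omega=(\omega_1,\dots,\omega_n)$ the composition $f_\omega=f_{\omega_1}\circ\cdots\circ f_{\omega_n}$ is affine on each of finitely many maximal subintervals, and I let $\mathcal{A}_n$ be the list of the resulting affine branches $g$ whose image meets $\Lambda$, writing $\rho_g$ for the slope of $g$. A branch in $\mathcal{A}_{n+m}$ factors uniquely as $g_1\circ g_2$ with $g_1\in\mathcal{A}_n$, $g_2\in\mathcal{A}_m$ and $\rho_g=\rho_{g_1}\rho_{g_2}$ (here invariance $f_k(\Lambda)\subseteq\Lambda$ is used), so $\Phi_n(s)=\sum_{g\in\mathcal{A}_n}|\rho_g|^s$ is submultiplicative; since $\sum_k|\rho_k|<1$ forces the branch slopes to shrink geometrically, $P(s)=\lim_n\frac1n\log\Phi_n(s)$ exists, is strictly decreasing, and has a unique zero $s_0$. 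A stopping-time covering of $\Lambda$ by the images $g(\Lambda)$ — whose diameters are comparable to $|\rho_g|$, the ``small'' hypothesis being exactly what prevents the pieces from degenerating and keeps every image inside a fixed bounded interval — then gives $\overline{\dim}_{\rm B}\Lambda\le s_0$ for \emph{every} small CPLIFS; trivially $\overline{\dim}_{\rm B}\Lambda\le 1$ as well.

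Since $\dim_{\rm H}\Lambda\le\underline{\dim}_{\rm B}\Lambda\le\overline{\dim}_{\rm B}\Lambda\le\min\{1,s_0\}$, it remains to prove $\dim_{\rm H}\Lambda\ge\min\{1,s_0\}$ for a typical (i.e.\ $\dim_{\rm P}$-typical) $\mathcal{F}$. Fix $\varepsilon>0$, and use $P(s_0-\varepsilon)>0$ together with submultiplicativity to pick $n$ and a finite subfamily $\mathcal{G}\subset\mathcal{A}_n$ with $\sum_{g\in\mathcal{G}}|\rho_g|^{\,s_0-\varepsilon}\ge1$ such that, after restriction to a suitable compact invariant interval avoiding all interfering breakpoints, $\{\,g:g\in\mathcal{G}\,\}$ is a genuine \emph{self-similar} IFS whose attractor $\Lambda_{\mathcal{G}}$ lies inside $\Lambda$ and whose similarity dimension is at least $s_0-\varepsilon$. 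If this sub-IFS obeyed the strong separation condition we would be done immediately; in general its branches overlap, and the typicality of $\mathcal{F}$ is used precisely to remove those overlaps.

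For that I would regard a small CPLIFS as a point of its natural parameter space — the admissible slopes together with the positions of the breakpoints, with the combinatorial type held fixed — and note that for a fixed $\mathcal{G}$ the contraction ratios vary smoothly while the translation parts of the maps $g\in\mathcal{G}$ move transversally in the parameters, distinct cylinders being separated at a definite rate. The transversality method (Pollicott--Simon, Solomyak, Peres--Schlag), or equivalently Hochman's exponential-separation criterion backed by a Diophantine condition on the translations, then yields $\dim_{\rm H}\Lambda_{\mathcal{G}}=\min\{1,\ \text{similarity dimension of }\mathcal{G}\}\ge\min\{1,s_0-\varepsilon\}$ for a typical parameter. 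As only countably many subfamilies $\mathcal{G}$ arise — over all $n$ and all rational $\varepsilon$ — I would intersect the corresponding typical sets; on this still-typical set, letting $\varepsilon\to0$ gives $\dim_{\rm H}\Lambda\ge\min\{1,s_0\}$, hence $\dim_{\rm H}\Lambda=\dim_{\rm B}\Lambda$, both equal to $\min\{1,s_0\}$.

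The genuine obstacle, I expect, is this last step: one must choose the parametrization so that the transversality — equivalently, exponential-separation — hypothesis holds for \emph{every} relevant subsystem $\mathcal{G}$ simultaneously, with an exceptional parameter set negligible in the sense in which ``typical'' is meant in \cite{prokaj2021piecewise}, and one has to control the joint variation with the parameters of both $s_0$ and the extracted subsystems (so that a single $\mathcal{G}$ works on a whole neighbourhood) and to verify that the slope bounds $|\rho_k|<\tfrac12$ in the injective case (resp.\ $|\rho_k|<\tfrac13$ otherwise) are exactly what keeps the extracted subsystems nondegenerate and the transversality estimates uniform. Packaging all of this into a Fubini/Borel--Cantelli argument over the parameter space is the technical heart of the matter; the rest is bookkeeping with the piecewise-linear combinatorics.
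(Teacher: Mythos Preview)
This theorem is not proved in the present paper --- it is quoted from \cite{prokaj2021piecewise} --- but the paper does indicate the original strategy: a $\dim_{\rm P}$-typical small CPLIFS is \emph{regular} (Theorem~\ref{sf91}), i.e.\ no breaking point lies on $\Lambda$; regularity allows one to associate a self-similar graph-directed IFS with the \emph{same} attractor (see \cite[Section~4]{prokaj2021piecewise}); and then the exponential separation condition, itself $\dim_{\rm P}$-typical, gives $\dim_{\rm H}\Lambda=\dim_{\rm B}\Lambda=\min\{1,\alpha\}$ via Theorem~\ref{sf77}. The smallness bounds $\rho_k<\tfrac12$ (resp.\ $\tfrac13$) enter in the proof of regularity, not in any transversality estimate.

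Your route is genuinely different and has a real gap. The upper bound $\overline{\dim}_{\rm B}\Lambda\le s_0$ via the branch pressure is fine and matches the paper's ``natural dimension''. The problem is the lower bound: you assert that one can pick $\mathcal{G}\subset\mathcal{A}_n$ and ``restrict to a suitable compact invariant interval avoiding all interfering breakpoints'' so that $\mathcal{G}$ becomes a self-similar IFS with $\Lambda_{\mathcal{G}}\subset\Lambda$. The existence of such an interval is precisely the content of regularity, which you never establish. Without it, each affine map $g\in\mathcal{G}$ agrees with an iterate $f_\omega$ only on its interval of linearity; the attractor $\Lambda_{\mathcal{G}}$ of the \emph{globally defined} similarities need not stay inside those intervals, and then the inclusion $\Lambda_{\mathcal{G}}\subset\Lambda$ --- on which your entire lower bound rests --- can fail. (A secondary looseness: in your factorisation $g=g_1\circ g_2$ the ``image meets $\Lambda$'' condition need not pass to the inner factor $g_2$, so submultiplicativity of $\Phi_n$ as you defined it is not immediate.) You correctly sense there is an obstacle, but you locate it in the transversality bookkeeping; in fact the missing ingredient is earlier, and it is exactly what Theorem~\ref{sf91} supplies. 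Once regularity is known, the cited proof applies Hochman/ESC to the full associated GDIFS rather than to extracted subsystems, so no countable intersection over approximating $\mathcal{G}$'s is needed.
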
 
The equality of the box and Hausdorff dimensions of the attractor is thus typical in some sense. Namely, we parametrized each piecewise linear function by its slopes, its value at zero, and by the points where it changes slope. Then for a fixed vector of slopes, we call a property $\mathfrak{P}$ \texttt{$\dim_{\rm P}$-typical} if the packing dimension of the set of those parameters for which the appropriate CPLIFS does not satisfy property $\mathfrak{P}$ is strictly smaller than the packing dimension of the whole parameter space (See Definition \ref{cs67}).

If for a CPLIFS $\mathcal{F}=\{f_k\}_{k=1}^m$ none of the functions change slope on the attractor $\Lambda$, than we call $\mathcal{F}$ a \texttt{regular CPLIFS}. We detailed in \cite[Section~4]{prokaj2021piecewise} how regularity implies the existence of a graph-directed IFS with the same attractor $\Lambda$. The proof of Theorem \ref{sf94} strongly relies on the following result.
\begin{theorem}[Theorem 2.3 of \cite{prokaj2021piecewise}]\label{sf91}
  A $\dim_{\rm P}$-typical small injective CPLIFS is regular.
\end{theorem}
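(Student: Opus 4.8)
The plan is to show that the set $\mathcal E\subseteq\mathbb R^N$ of parameters for which $\mathcal F$ fails to be regular has packing dimension strictly smaller than $N$, the dimension of the (open, non‑empty) parameter space; here $N$ counts the $m$ values at $0$ together with all the breakpoints, the slopes being fixed. Since a non‑regular $\mathcal F$ must have some breakpoint of some $f_k$ inside $\Lambda$, and $\dim_{\rm P}$ is finitely and countably stable, it suffices to fix one breakpoint $b$ of one map $f_k$, write $\mathbf t\in\mathbb R^{N-1}$ for the remaining parameters, restrict to a bounded parameter region, and bound $\dim_{\rm P}$ of $E:=\{(b,\mathbf t):b\in\Lambda(b,\mathbf t)\}$.

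I would start from the natural covering of the attractor. Choose $M$ with $[-M,M]\supseteq\Lambda$ and $f_i([-M,M])\subseteq[-M,M]$ for all $i$, uniformly over the bounded parameter region; then $\Lambda\subseteq\bigcup_{|w|=n}\Delta_w$ with $\Delta_w:=f_w([-M,M])$, $f_w:=f_{w_1}\circ\cdots\circ f_{w_n}$, and $|\Delta_w|\le 2M\rho_w$ where $\rho_w:=\prod_i|\rho_{w_i}|$. Passing to the antichain $W_\delta:=\{w:\rho_w<\delta\le\rho_{w^-}\}$ ($w^-$ dropping the last letter), the usual Moran‑type estimate gives $\#W_\delta\le(\delta\min_i|\rho_i|)^{-s_0}$, where $s_0$ is the root of $\sum_i|\rho_i|^{s_0}=1$; the assumption $\sum_i|\rho_i|<1$ forces $s_0<1$. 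Since $E\subseteq\bigcup_{w\in W_\delta}G_w$ for every $\delta$, with $G_w:=\{(b,\mathbf t):b\in\Delta_w(b,\mathbf t)\}$, it remains to cover each $G_w$ efficiently by cubes of side $\delta$.

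The heart of the matter is the claim that for every $w$ and every fixed $\mathbf t$ the slice $\{b:b\in\Delta_w(b,\mathbf t)\}$ lies in an interval of length $\le C\rho_w$, with $C$ depending only on the fixed slopes and $M$, and that this interval depends on $\mathbf t$ in a uniformly Lipschitz way. There are two regimes, according to the first position $l$ at which the letter $k$ occurs in $w$. If $l\ge2$ (or $k$ is absent), then varying $b$ moves $\Delta_w(b,\mathbf t)$ only through copies of $f_k$ shielded by at least one contraction ratio of modulus $<\tfrac12$, so the midpoint of $\Delta_w(b,\mathbf t)$ has $b$‑Lipschitz constant bounded by a fixed number $<1$ — this is where summability of the geometric series uses $|\rho_i|<\tfrac12$ — whence $b\mapsto b-(\text{midpoint of }\Delta_w(b,\mathbf t))$ is strictly monotone with difference quotient bounded away from $0$, giving the claim. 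If $l=1$, i.e.\ $w_1=k$, then $b\in\Delta_w(b,\mathbf t)$ is equivalent to $f_k^{-1}(b)\in f_{w_2\cdots w_n}([-M,M])$; in the injective case $f_k$ is a piecewise linear homeomorphism of $\mathbb R$, and the key point is that the map $s\mapsto (f_k^{(s)})^{-1}(s)$ — where $f_k^{(s)}$ denotes $f_k$ with the chosen breakpoint moved to $s$ — is continuous with difference quotient bounded below in modulus by $\lambda:=(1-\beta_k)/|\rho_k|>1$, with $\beta_k<\tfrac12$ bounding the jump of the slope of $f_k$ at that breakpoint; both $|\rho_k|<\tfrac12$ and $\beta_k<\tfrac12$ are used here, and $\lambda>1$ because the slopes are fixed. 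Since $f_{w_2\cdots w_n}([-M,M])$ has length $\le 2M\rho_w/|\rho_k|$ and moves in $b$ with Lipschitz constant $<1<\lambda$, the slice is again contained in an interval of length $\lesssim\rho_w$. The uniform Lipschitz dependence on $\mathbf t$ follows from the same bookkeeping, once more using $|\rho_i|<\tfrac12$ to sum the relevant geometric series.

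Granting the slice estimate, $G_w$ is contained in the $C\rho_w$‑thickening, in the $b$‑direction, of the graph of a uniformly Lipschitz function over the $\mathbf t$‑space; covering the bounded $\mathbf t$‑region by $\lesssim\delta^{-(N-1)}$ cubes of side $\delta$ thus covers $G_w$ by $\lesssim\delta^{-(N-1)}$ such cubes as soon as $\rho_w\lesssim\delta$. Combining with $\#W_{\delta'}\lesssim\delta^{-s_0}$ for a suitable $\delta'\asymp\delta$ yields $N_\delta(E)\lesssim\delta^{-(N-1+s_0)}$, hence $\dim_{\rm P}E\le\overline{\dim}_{\rm B}E\le N-1+s_0<N$; taking the finite union over all breakpoints gives $\dim_{\rm P}\mathcal E<N$. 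The step I expect to be the main obstacle is the regime $w_1=k$ in the slice estimate: for words beginning with $k$ the cylinder $\Delta_w$ can move with the parameter $b$ exactly as fast as $b$ itself, so the naive transversality between ``the point $b$'' and ``the cylinder $\Delta_w$'' collapses; it is recovered only by passing to the expanding inverse branch $f_k^{-1}$, while carefully accounting for the fact that $f_k$ — and hence $f_k^{-1}$ — also depends on $b$. Ensuring that the constants $C$, $\lambda$ and the various Lipschitz constants are uniform in $w$ and $n$ is precisely what pins down the ``small, injective'' hypotheses.
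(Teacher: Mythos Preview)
The present paper does not contain a proof of this statement: it is quoted as Theorem~2.3 of \cite{prokaj2021piecewise} with no argument supplied here, so there is nothing in this paper against which to compare your proposal.

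Your outline is nonetheless a sound direct transversality argument and would prove the theorem. One simplification: the case split on whether $w_1=k$ is not actually needed. In the paper's parametrisation $f_k(0)=\tau_k$ is held fixed while $b$ varies, so moving the breakpoint $b$ by $\varepsilon$ changes $f_k(y)$ by $0$ on the side of $b$ containing $0$ and by $(\rho_--\rho_+)\varepsilon$ on the other side; injectivity forces the two adjacent slopes to have the same sign, whence $|\rho_--\rho_+|<\rho_k\le\rho_{\max}<\tfrac12$. A telescoping recursion then shows that every endpoint of $\Delta_w$ has $b$-Lipschitz constant at most $\rho_{\max}/(1-\rho_{\max})<1$ for \emph{every} word $w$, including those with $w_1=k$, so the slice $\{b:b\in\Delta_w(b,\mathbf t)\}$ has length at most $\tfrac{1-\rho_{\max}}{1-2\rho_{\max}}\cdot 2M\rho_w$ uniformly. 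Your worry that ``the cylinder can move as fast as $b$'' does not materialise under this parametrisation; your inverse-branch treatment of the $w_1=k$ case is correct but superfluous, and the explicit constant $\lambda=(1-\beta_k)/|\rho_k|$ you write is not exactly what the implicit-function computation produces, though any $\lambda>1$ would do. The remaining steps --- the Moran cutset bound $\#W_\delta\lesssim\delta^{-s_0}$ with $s_0<1$ forced by $\sum_k\rho_k<1$, the Lipschitz-graph covering of each $G_w$ over the $\mathbf t$-variables, and the inequality $\dim_{\rm P}\le\overline\dim_{\rm B}$ --- are standard and correct.
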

Following this line, we are going to show that \eqref{sf93} holds for some non-regular families as well, without restrictions on the slopes of the functions.

\subsection{Plan of the paper}

We will show that in the case of some specific families of injective CPLIFS, regularity and smallness are not necessary to obtain \eqref{sf93}.
In section \ref{sf90}, we work with CPLIFS where the functions can only change slope at some well defined points on the attractor. We will show how to construct an associated graph-directed iterated function system for these CPLIFS. Using this construction, we may apply theorems known for graph-directed systems to conclude the equality of dimensions.

In section \ref{sf89}, with the help of the theory of P. Raith and F. Hofbauer, we prove that a condition we call IOSC (see Definition \ref{sf87}) implies \eqref{sf93} for injective CPLIFS.

By using the most natural covering system of the attractor of an IFS, we define a pressure function and call its unique zero the natural dimension of the system.
We close the discussion by proving that the Lebesgue measure of the attractor of a Lebesgue typical CPLIFS is always positive given that the natural dimension of the system is bigger than $1$.

\section{Preliminaries}

\subsection{Continuous piecewise linear iterated function systems}

\begin{figure}[t]
  \centering
  \includegraphics[width=9cm]{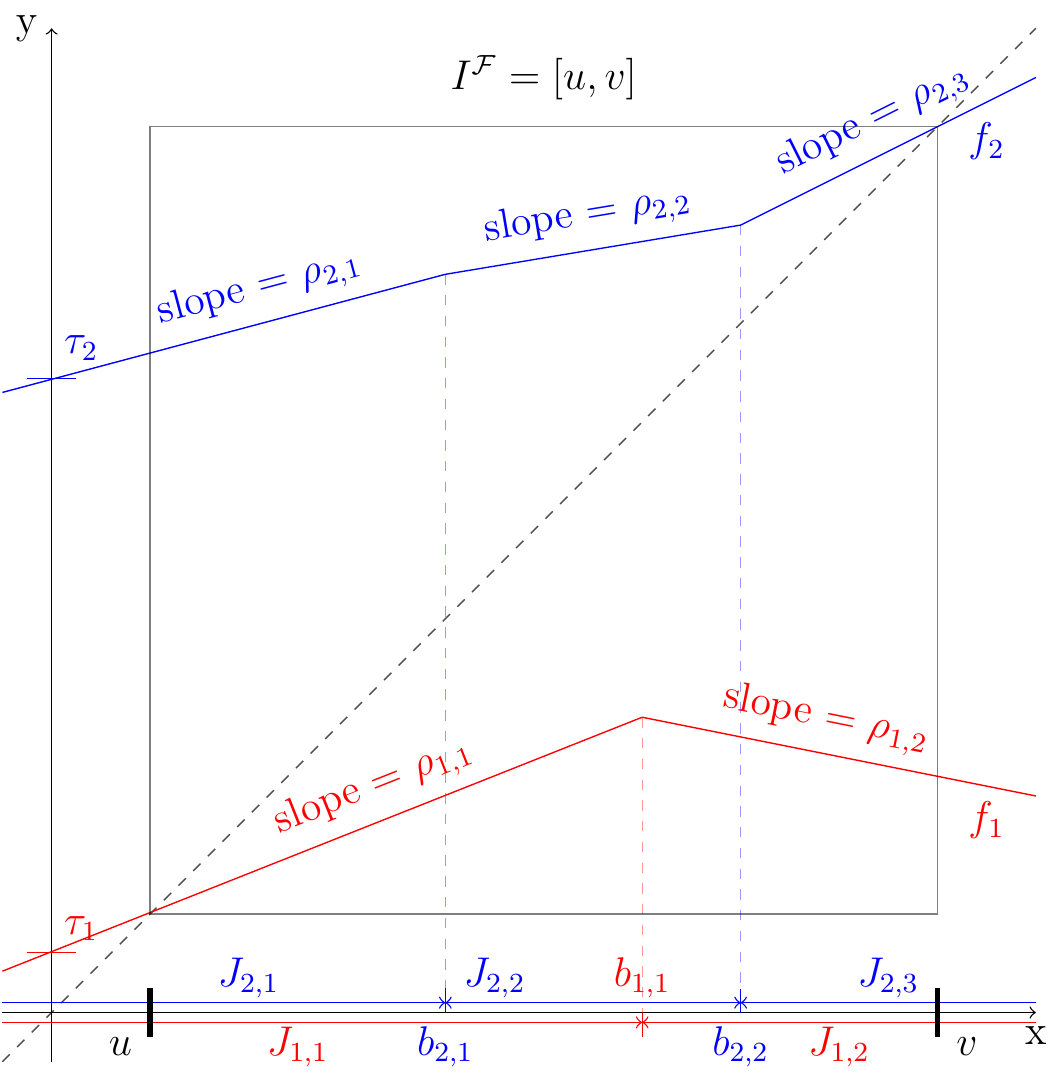}
  \caption{A general CPLIFS with the related notations.}\label{cv47}
\end{figure}

Let $\mathcal{F}=\left\{f_k\right\}_{k=1}^{m }$ be a CPLIFS and $\Sigma:=[m]^{\mathbb{N}}$ be its symbolic space. We denote the set of finite words by $\Sigma^{\ast}$ and the set of $n$ length words by $\Sigma^n$ for any $n\in \mathbb{N}^{+}$.
We write $l(k)$ for the number of breaking points of $f_k$ for $k\in[m]$,
and we say that the \texttt{type of the CPLIFS} is the vector
  \begin{equation}\label{cv44}
  \pmb{\ell }=(l(1), \dots ,l(m)).
\end{equation}
For example, the type of the CPLIFS on Figure \ref{cv47} is
$\pmb{\ell }=(l(1),l(2))=(1,2)$. If $\mathcal{F}$ is a CPLIFS of type $\pmb{\ell }$,  then we write
$$\mathcal{F}\in\mathrm{CPLIFS}_{\pmb{\ell }}.$$
 The breaking points of $f_k$
are denoted by $b_{k,1} < \cdots < b_{k,l(k)}$. We sometimes write $B(k):=\{b_{k,1},\dots ,b_{k,l(k)}\}$ for the set of breaking points of $f_k$.
Let $L:=\sum\limits_{k=1}^{m}l(k)$ be the \texttt{total number of breaking points} of the functions of $\mathcal{F}$ with multiplicity, as some of the breaking points of two different elements of $\mathcal{F} $ may coincide.
We arrange all the breaking points in an
$L$ dimensional vector $\mathfrak{b}\in \mathbb{R}^L$
 in a way described below. First we partition $[L]=\left\{1, \dots ,L\right\}$ into blocks of length $l(k)$ for $k\in[m]$. The $k$-th block is
 \begin{equation}\label{cv52}
 L^k:=\left\{p\in\mathbb{N} :1+\sum\limits_{j=1}^{k-1}l(j) \leq p
 \leq \sum\limits_{j=1}^{k}l(j)
\right\}
 \end{equation}
 where $\sum\limits_{j=1}^{k-1}$ is meant to be $0$ when $k=1$.
 We use this convention without further mentioning it throughout the paper. 
 The breaking points of $f_k$ will make the coordinates of $\mathfrak{b}$ indexed by the block $L(k)$ in increasing order.
\begin{equation}\label{cv58}
  \mathfrak{b}
  =
  (\underbrace{b_{1,1}, \dots ,b_{1,l(1)}}_{L^1},
  \underbrace{b_{2,1}, \dots ,b_{2,l(2)}}_{L^2},
  \dots ,
  \underbrace{b_{m,1}, \dots ,b_{m,l(m)}}_{L^m}).
\end{equation}
The set of breaking points vectors $\mathfrak{b}$ for a type $\pmb{\ell }$ CPLIFS is
\begin{equation}\label{cv59}
  \mathfrak{B}^{\pmb{\ell }}:=
  \left\{(x_1,\dots ,x_m)\in\mathbb{R}^{L}
  : x_i<x_j \mbox{ if } i<j \mbox{ and } \exists k\in [m] \mbox{ with }
  i,j\in L^k
  \right\}.
\end{equation}
The $l(k)$ breaking points of the piecewise linear continuous function $f_k$ determines the $l(k)+1$  \texttt{intervals of linearity} $J_{k,i}^{\mathfrak{b}}$, among which the first and the last are actually half lines:
\begin{equation}\label{cv57}
 J_{k,i}:=  J_{k,i}^{\mathfrak{b}}:=
  \left\{
    \begin{array}{ll}
      (-\infty ,b_{k,i}), & \hbox{if $i=1$;} \\
      (b_{k,i-1},b_{k,i}), & \hbox{if $2\leq i\leq l(k)$;} \\
      (b_{k,l(k)},\infty ), & \hbox{if $i=l(k)+1$.}
    \end{array}
  \right.
\end{equation}
The derivative of $f_k$ exists on $J_{k,i}$ and is equal to the constant
\begin{equation}\label{cs68}
  \rho_{k,i}:\equiv f'_k|_{J_{k,i}}.
\end{equation}
We arrange the contraction ratios $\rho_{k,i}\in (-1,1)\setminus \{ 0\}$ into a vector $\pmb{\rho}$ in an analogous way as we arranged the breaking points into a vector $\mathfrak{b}$ in \eqref{cv58}, taking into account that there is one more contraction ratio for each $f_k$ than breaking points:
\begin{equation}\label{cv56}
  \pmb{\rho}:=\pmb{\rho}_{\mathcal{F}}:=
  (
  \underbrace{\rho_{1,1}, \dots ,\rho_{1,l(1)+1}}_{\widetilde{L}^1},
 \dots
 ,\underbrace{\rho_{m,1}, \dots ,\rho_{m,l(m)+1}}_{\widetilde{L}^m}
  )\in \left( (-1,1)\setminus \{ 0\}\right)^{L+m},
\end{equation}
where
\begin{equation}\label{cv51}
  \widetilde{L}^k:=
  \left\{ p \in\mathbb{N}:
  1+\sum\limits_{j=1}^{k-1}\left(1+l(j)\right) \leq p  \leq
  \sum\limits_{j=1}^{k}\left(1+l(j)\right) \right\}.
\end{equation}
We call $\pmb{\rho}$ the \texttt{vector of contractions}.
The set of all possible values of $\pmb{\rho}$ for an $\mathcal{F}\in \mathrm{CPLIFS}_{\pmb{\ell }}$
is
\begin{equation}\label{cv50}
  \mathfrak{R}^{\pmb{\ell }}:
  =
  \left\{\pmb{\rho}
  \in\left((-1,1)\setminus \{ 0\}\right)^{L+m}:
  \forall k\in[m],\
\!  \forall i,i+1\in\widetilde{L}^k,\!
  \rho_{i}\ne \rho_{i+1}
  \right\},
\end{equation}
where $\pmb{\rho}=(\rho_1, \dots ,\rho_{L+m})$.
Write $\rho_k:=\max_{j\in[l(k)+1]}\vert\rho_{k,j}\vert$ for each $k\in[m]$, and write $\rho_{\max}, \rho_{\min}$ for the biggest and the smallest contraction ratios of $\mathcal{F}$ respectively.
Let us use the usual notations $\rho_{k_1 \dots k_n}:=\rho_{k_1}\cdots\rho_{k_n}$ and $f_{k_1 \dots k_n}:=f_{k_1}\circ\cdots\circ f_{k_n}$.
Clearly,
\begin{equation}\label{ct67}
  \vert f'_{k_1 \dots k_n}(x)\vert \leq \rho_{k_1 \dots k_n},
  \mbox{ for all } x \mbox{, where the derivative exists}.
\end{equation}

Finally, we write
\begin{equation}\label{cv55}
  \tau_k:=f_k(0), \mbox{ and }
  \pmb{\tau}:=(\tau_1, \dots ,\tau_m)\in\mathbb{R}^{m}.
\end{equation}
So, the parameters that uniquely determine an $\mathcal{F}\in\mathrm{CPLIFS}_{\pmb{\ell }}$
can be organized into a vector
\begin{equation}\label{cv53}
  \pmb{\lambda}=(\mathfrak{b},\pmb{\tau},\pmb{\rho})\in
  \pmb{\Gamma}^{\pmb{\ell }}:=\mathfrak{B}^{\pmb{\ell }}\times\mathbb{R}^m\times\mathfrak{R}^{\pmb{\ell }}
   \subset \mathbb{R}^L\times\mathbb{R}^m\times\mathbb{R}^{L+m}=
   \mathbb{R}^{2L+2m}.
\end{equation}
We call $\pmb{\Gamma}^{\pmb{\ell}}$ the \texttt{parameter space} of CPLIFS of type $\pmb{\ell}$.
For a $\pmb{\lambda}\in \pmb{\Gamma}^{\pmb{\ell }}$ we write $\mathcal{F}^{\pmb{\lambda}}$ for the corresponding CPLIFS and $\Lambda^{\pmb{\lambda}}$ for its attractor. Similarly, for an $\mathcal{F}\in\mathrm{CPLIFS}_{\pmb{\ell}}$ we write
$\pmb{\lambda}(\mathcal{F})$ for the corresponding element of $\pmb{\Gamma}^{\pmb{\ell }}$. We will refer to $(\mathfrak{b},\pmb{\tau}):=(\mathfrak{b},\pmb{\tau})(\mathcal{F})$ as the \texttt{translation parameters} of $\mathcal{F}$.

Let $S_{k,i}$ be the contracting similarity on $\mathbb{R}$ that satisfies 
$S_{k,i}|_{J_{k,i}}\equiv f_k|_{J_{k,i}}$. 
We say that $\mathcal{S}_{\mathcal{F}}:=\left\{S_{k,i}\right\}_{k\in[m],i\in [l(k)+1]}$ is the \texttt{self-similar IFS  generated by the CPLIFS  $\mathcal{F}$}. 
Obviously, $S_{k,i}^{\prime}=\rho_{k,i}$.

With the help of these notations we can define properly the $\dim_{\rm P}$-typicality.
\begin{definition}\label{cs67}
  Let $\mathfrak{P}$ be a property that makes sense for every CPLIFS $\mathcal{F}$.   
  For a contraction vector $\pmb{\rho}\in\mathfrak{R}^{\pmb{\ell} }$ we consider the (exceptional) set  
  \begin{equation}\label{cs71}
    E_{\mathfrak{P},\pmb{\ell }}^{\pmb{\rho}}=:
    \left\{
    (\mathfrak{b},\pmb{\tau})\in  \mathfrak{B}^{\pmb{\ell }}\times
    \mathbb{R}^m
    :
    \mathcal{F}^{(\mathfrak{b},\pmb{\tau},\pmb{\rho})} \mbox{ does not have property } \mathfrak{P}
    \right\}.
  \end{equation}
  We say that \texttt{property $\mathfrak{P}$ holds $\dim_{\rm P}$-typically} if for all
  type $\pmb{\ell}$ and for all $\pmb{\rho}\in\mathfrak{R}^{\pmb{\ell} }$ we have
  \begin{equation}\label{cs70}
     \dim_{\rm P} E_{\mathfrak{P},\pmb{\ell}}^{\pmb{\rho}} < L+m,
  \end{equation}
  where $\pmb{\ell }=(l(1), \dots ,l(m))$ and $L=\sum\limits_{k=1}^{m}l(k)$ as above.
\end{definition}

\subsection{Graph-directed iterated function systems}\label{sf92}

We present here the most important notations and results related to self-similar Graph-Directed Iterated function Systems (GDIFS).
In this subsection we follow  the book \cite{falconer1997techniques} and the papers \cite{mauldin1988hausdorff} and \cite{keane2003dimension}.
Just like in the last reference, we don't assume any separation conditions.

To define the graph-directed iterated function systems we need a directed graph $\mathcal{G}=\left(\mathcal{V,E}\right)$.
We label the vertices of this graph with the numbers $\lbrace 1,2,...,q\rbrace$, where $\vert\mathcal{V}\vert =q$. This $\mathcal{G}$ graph is not assumed to be simple, it might have multiple edges between the same vertices, or even loops. For an edge $e=(i,j)\in\mathcal{E}$ we write $s(e):=i$ for the source and $t(e):=j$ for the target of $e$.
Denote with $\mathcal{E}_{i,j}$ the set of directed nodes from vertex $i$ to vertex $j$, and write $\mathcal{E}_{i,j}^k$ for the set of length $k$ directed paths between $i$ and $j$. Similarly, we write $\mathcal{E}^n$ for the set of all paths of length $n$ in the graph.
We assume that $\mathcal{G}$
is strongly connected. That is for every $i,j\in\mathcal{V}$ there is a directed path in $\mathcal{G}$ from $i$ to $j$.
\par For all edge $e\in \mathcal{E}$ given a contracting similarity mapping $F_e:\mathbb{R} \rightarrow \mathbb{R}$. The contraction ratio is denoted by $r_e \in (-1,1)\setminus \{ 0\}$. Let $e_1 \dots e_n$ be a path in $\mathcal{G}$.
Then we write
$F_{e_1 \dots e_n}:=F_{e_1}\circ\cdots\circ F_{e_n}$.
It follows from the proof of \cite[Theorem 1.1]{mauldin1988hausdorff}
that there exists a unique family of non-empty compact sets $\Lambda_1,...,\Lambda_q$ labeled by the elements of $\mathcal{V}$, for which
\begin{equation}\label{cv66}
\Lambda_i=\bigcup\limits_{j=1}^q
\bigcup\limits_{e\in\mathcal{E}_{i,j}}F_e(\Lambda_j),\quad i=1, \dots ,q.
\end{equation}
We call the sets $\lbrace \Lambda_1,...,\Lambda_q\rbrace$ \texttt{graph-directed sets}, and we call $\Lambda:=\bigcup\limits_{i=1}^{q}\Lambda_i$ the attractor of the \texttt{self-similar graph-directed IFS} $\mathcal{F}=\{ F_e\}_{e\in\mathcal{E}}$.
We abbreviate it \texttt{self-similar GDIFS}.

By iterating \eqref{cv66} we obtain
\begin{equation}\label{sf86}
\Lambda_i=\bigcup_{j=1}^q \bigcup_{(e_1,...e_k)\in \mathcal{E}_{i,j}^k} F_{e_1 \dots e_k} (\Lambda_j).
\end{equation}

To get the most natural guess for the dimension of $\Lambda$, we define a $q\times q$ matrix with the following entries
\begin{equation}\label{ct50}
C^{(s)}=(c^{(s)}(i,j))_{i,j=1}^{q} \mbox{ and }
c^{(s)}(i,j)= \left\{ \begin{array}{ll}
0, & \hbox{if $\mathcal{E}_{i,j}= \emptyset $;} \\
\sum\limits_{e\in \mathcal{E}_{i,j}} |r_{e}|^{s}, & \hbox{otherwise;}
\end{array}
\right.
\end{equation}
where $s\geq 0$ is a parameter.
The spectral radius of $C^{(s)}$ is denoted by $\varrho(C^{(s)})$. Mauldin and Williams \cite[Theorem 2]{mauldin1988hausdorff} proved that the function
$s\mapsto \varrho(C^{(s)})$ is strictly decreasing, continuous, greater than $1$ at $s=0$, and less than $1$ if $s$ is large enough.

\begin{definition}\label{cv65}
For the self-similar GDIFS $\mathcal{F}=\left\{f_e\right\}_{e\in \mathcal{E}}$ there exists a unique $\alpha=\alpha(\mathcal{F})$ satisfying
\begin{equation}\label{cv64}
\varrho (C^{(\alpha )})=1.
\end{equation}
\end{definition}

The relation of $\alpha$ to the dimension of the attractor is given by the following theorem. It was published in \cite{mauldin1988hausdorff}
apart from the box dimension part, which is from \cite{falconer1997techniques}.
\begin{theorem}\label{cs14}
Let $\mathcal{F}=\left\{F_e\right\}_{e\in E}$ be a self-similar GDIFS as above.
In particular, the graph $\mathcal{G}=(\mathcal{V},E)$ is strongly connected and let $\Lambda$ be the attractor.
  \begin{enumerate}[{\bf (a)}]
    \item $\dim_{\rm H} \Lambda \leq \alpha$.
    \item Let $I_k$ be the interval spanned by $\Lambda_k$ for all $k\in\mathcal{V}$. If the intervals $\left\{I_k\right\}_{k=1}^{q}$ are pairwise disjoint then
  $\dim_{\rm H} \Lambda=\dim_{\rm B}\Lambda=\alpha $. Moreover,
  $0<\mathcal{H}^ {\alpha}(\Lambda)<\infty $.
  \end{enumerate}
\end{theorem}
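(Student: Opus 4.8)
The plan is to establish (a) by the natural greedy covering, and (b) by combining an efficient covering estimate (for the box dimension) with a mass distribution argument (for the Hausdorff lower bound); the algebraic engine in both is the identity $\bigl(C^{(s)}\bigr)^n_{i,j}=\sum_{\omega\in\mathcal{E}_{i,j}^n}|r_\omega|^s$, valid for every $n$ and immediate from \eqref{ct50} by induction (each length-$(n+1)$ path factors uniquely into a length-$n$ path followed by an edge). Here, for a path $\omega=e_1\cdots e_n$ I write $|r_\omega|:=|r_{e_1}|\cdots|r_{e_n}|$, $s(\omega):=s(e_1)$, $t(\omega):=t(e_n)$, and note that summing the identity over $j$ gives $\sum_j\bigl(C^{(s)}\bigr)^n_{i,j}=\sum_{\omega\in\mathcal{E}^n,\ s(\omega)=i}|r_\omega|^s$. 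For (a), fix $s>\alpha$; by the monotonicity of $t\mapsto\varrho(C^{(t)})$ recalled above, $\varrho(C^{(s)})<1$, so $\bigl(C^{(s)}\bigr)^n\to0$ entrywise. By \eqref{sf86}, for each $i$ and $n$ we have the cover $\Lambda_i=\bigcup_j\bigcup_{\omega\in\mathcal{E}_{i,j}^n}F_\omega(\Lambda_j)$ with $\mathrm{diam}\,F_\omega(\Lambda_j)=|r_\omega|\,\mathrm{diam}\,\Lambda_j\le r_{\max}^n\max_k\mathrm{diam}\,\Lambda_k\to0$, where $r_{\max}:=\max_e|r_e|<1$; hence
\[
\sum_j\sum_{\omega\in\mathcal{E}_{i,j}^n}\bigl(\mathrm{diam}\,F_\omega(\Lambda_j)\bigr)^s\ \le\ \bigl(\max_k\mathrm{diam}\,\Lambda_k\bigr)^s\sum_j\bigl(C^{(s)}\bigr)^n_{i,j}\ \longrightarrow\ 0\quad(n\to\infty),
\]
so $\mathcal{H}^s(\Lambda_i)=0$ for each $i$, and therefore $\mathcal{H}^s(\Lambda)=0$ because $\Lambda=\bigcup_i\Lambda_i$ is a finite union. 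Letting $s\downarrow\alpha$ gives $\dim_{\rm H}\Lambda\le\alpha$.

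For (b) I first record the matrix input. Since $\mathcal{G}$ is strongly connected, $C^{(\alpha)}$ is nonnegative and irreducible, so by Perron--Frobenius $\varrho(C^{(\alpha)})=1$ is a simple eigenvalue with a strictly positive right eigenvector $\mathbf{v}=(v_1,\dots,v_q)$, that is $\sum_j\sum_{e\in\mathcal{E}_{i,j}}|r_e|^\alpha v_j=v_i$ for all $i$. A short induction — in any prefix-free family of paths from $i$ that meets every infinite path, replace a deepest path together with all of its siblings by their common parent, using the eigenvector equation — shows that $\sum_{\omega\in\mathcal{W}}|r_\omega|^\alpha v_{t(\omega)}=v_i$ for every such family $\mathcal{W}$, whence $\sum_{\omega\in\mathcal{W}}|r_\omega|^\alpha\le v_i/\min_k v_k$ uniformly in $\mathcal{W}$. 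Now, for $\delta>0$, let $\mathcal{W}_i(\delta)$ be the stopping family of paths $\omega$ from $i$ with $|r_\omega|\le\delta<|r_{\omega^-}|$, where $\omega^-$ is $\omega$ with its last edge deleted. The sets $F_\omega(\Lambda_{t(\omega)})$ with $\omega\in\bigcup_i\mathcal{W}_i(\delta)$ cover $\Lambda$ by sets of diameter at most $\delta\max_k\mathrm{diam}\,\Lambda_k$; since $|r_\omega|>\delta\min_e|r_e|$ on $\mathcal{W}_i(\delta)$, the uniform bound above forces $\#\mathcal{W}_i(\delta)=O(\delta^{-\alpha})$, so $\Lambda$ is covered by $O(\delta^{-\alpha})$ sets of diameter $O(\delta)$ and $\overline{\dim}_{\rm B}\Lambda\le\alpha$. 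The same covers have $\alpha$-dimensional sum $\sum_\omega(\mathrm{diam}\,F_\omega(\Lambda_{t(\omega)}))^\alpha\le(\max_k\mathrm{diam}\,\Lambda_k)^\alpha\sum_i v_i/\min_k v_k$, bounded independently of $\delta$, so $\mathcal{H}^\alpha(\Lambda)<\infty$. Since $\dim_{\rm H}\le\overline{\dim}_{\rm B}$ always, it remains to prove the lower bound $\dim_{\rm H}\Lambda\ge\alpha$.

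For the lower bound, on the space of infinite paths issuing from $i$ let $\mu_i$ be the Borel probability measure with $\mu_i([\omega])=|r_\omega|^\alpha v_{t(\omega)}/v_i$ on each cylinder $[\omega]$; Kolmogorov consistency and total mass $1$ are exactly the eigenvector equation. Pushing $\mu_i$ forward by the natural coding map onto $\Lambda_i$ and letting $\bar\mu$ be the sum over $i$ of these pushforwards produces a nonzero finite measure on $\Lambda$ with $\bar\mu\bigl(F_\omega(\Lambda_{t(\omega)})\bigr)\ge\mu_i([\omega])=|r_\omega|^\alpha v_{t(\omega)}/v_i$. It is precisely here that the hypothesis that the spanning intervals $I_1,\dots,I_q$ are pairwise disjoint enters: pulled back along the similarities $F_e$ it yields a bounded-multiplicity property of the natural covering, so that for each small $\rho$ only boundedly many of the pieces $F_\omega(I_{t(\omega)})$ with $|r_\omega|\,\mathrm{diam}\,I_{t(\omega)}\asymp\rho$ — these being essentially non-overlapping intervals of length $\gtrsim\rho$ — can intersect a ball $B(x,\rho)$, while each such piece carries $\bar\mu$-mass $\lesssim|r_\omega|^\alpha\lesssim\rho^\alpha$. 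Hence $\bar\mu\bigl(B(x,\rho)\bigr)\le C\rho^\alpha$ for all $x\in\Lambda$ and all small $\rho$, and the mass distribution principle yields $\mathcal{H}^\alpha(\Lambda)\ge\bar\mu(\Lambda)/C>0$ and $\dim_{\rm H}\Lambda\ge\alpha$. Together with (a) and the box estimate this gives $\dim_{\rm H}\Lambda=\dim_{\rm B}\Lambda=\alpha$ and $0<\mathcal{H}^\alpha(\Lambda)<\infty$.

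The algebraic steps (path-counting through powers of $C^{(s)}$, Perron--Frobenius, the antichain identity) are routine. The main obstacle is the lower bound, and within it the geometric step of converting the disjointness of the $I_k$ into the quantitative bounded-overlap statement for the natural cover together with the matching upper estimate $\bar\mu(F_\omega(I_{t(\omega)}))\lesssim|r_\omega|^\alpha$: with no a priori separation among the images produced at a single vertex, one has to show that the disjoint geometry of the $I_k$, transported by the contractions, still prevents cylinder images of comparable size from clustering near a point, and then carry out the overlap bookkeeping carefully.
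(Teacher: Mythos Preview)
The paper does not prove this theorem: immediately before the statement it attributes the result to Mauldin--Williams \cite{mauldin1988hausdorff} and (for the box-dimension clause) to Falconer \cite{falconer1997techniques}, so there is no in-paper argument to compare against. Your proof of (a) and of the upper estimates in (b) --- the stopping family $\mathcal{W}_i(\delta)$, the Perron--Frobenius eigenvector identity on antichains, hence $\overline{\dim}_{\rm B}\Lambda\le\alpha$ and $\mathcal{H}^\alpha(\Lambda)<\infty$ --- is the standard route and is correct.

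The genuine gap is exactly where you yourself locate it: the lower bound. The hypothesis ``the intervals $I_k$ spanned by the $\Lambda_k$ are pairwise disjoint'' does \emph{not} deliver the bounded-overlap property you invoke, because it says nothing about how the first-level images $F_e(I_{t(e)})$ for edges $e$ sharing a source $i$ sit inside $I_i$; these may overlap freely. Indeed with $q=1$ the hypothesis is vacuous: take a single vertex with loops $x\mapsto x/2$, $x\mapsto x/2+\varepsilon$, $x\mapsto x/2+\tfrac12$ on $[0,1]$; then $\Lambda=[0,1]$, so $\dim_{\rm H}\Lambda=1$, while $\alpha=\log 3/\log 2>1$ and $\mathcal{H}^\alpha(\Lambda)=0$. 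Thus neither the clustering bound nor the companion estimate $\bar\mu\bigl(F_\omega(I_{t(\omega)})\bigr)\lesssim|r_\omega|^\alpha$ can be extracted from disjointness of the $I_k$ alone, and part (b) as literally written is false. What Mauldin--Williams actually assume is the graph-directed open set condition (open sets $U_i$ with the images $F_e(U_{t(e)})$, $s(e)=i$, pairwise disjoint inside $U_i$); under that hypothesis your mass-distribution outline goes through verbatim. The paper's phrasing of (b) is an imprecision, harmless for its application in Section~\ref{cp70}, where the first-level cylinder intervals are assumed disjoint and the genuine open set condition does hold.
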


The equality of the dimensions of the attractor is also known for typical translation parameters. Let $\mathcal{F}^{\mathbf{t}}=\{f_e(x)=\lambda_e\cdot x+ t_e\}_{e\in \mathcal{E}}$ be a family of self-similar GDIFS paramterized by the vector of translations $\mathbf{t}=(t_e)_{e\in \mathcal{E}}$. 

\begin{theorem}[Theorem 1 of \cite{keane2003dimension}]\label{sf72}
Let $\mathcal{F}^{\mathbf{t}}$ be a self-similar GDIFS with directed graph $\mathcal{G}^{\mathbf{t}}$ and attractor $\Lambda^{\mathbf{t}}$. Suppose that  $\mathcal{G}^{\mathbf{t}}$ is strongly connected and all the functions in $\mathcal{F}^{\mathbf{t}}$ has positive slopes.

Let $N:=|E|$ and write $\mathcal{L}_N$ for the $N$ dimensional Lebesgue measure. Then, for $\mathcal{L}_N$-almost every $\mathbf{t}\in \mathbb{R}^N$ we have 
  \begin{enumerate}[{\bf (a)}]
    \item $\dim_{\rm H} \Lambda^{\mathbf{t}} = \min\{1,\alpha\}$,
    \item if $\alpha>1$, then $\mathcal{L}_1(\Lambda^{\mathbf{t}})>0$.
  \end{enumerate}
\end{theorem}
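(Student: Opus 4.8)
The plan is to combine the trivial upper bound with the transversality method: put a natural measure on $\Lambda^{\mathbf t}$ and show that for $\mathcal L_N$-a.e.\ $\mathbf t$ it has finite $s$-energy for every $s<\min\{1,\alpha\}$ (this gives (a)) and is absolutely continuous when $\alpha>1$ (this gives (b)). The upper bound is free: $\Lambda^{\mathbf t}\subset\mathbb R$ gives $\dim_{\rm H}\Lambda^{\mathbf t}\le 1$, and Theorem~\ref{cs14}(a) gives $\dim_{\rm H}\Lambda^{\mathbf t}\le\alpha$ for \emph{every} $\mathbf t$, so $\dim_{\rm H}\Lambda^{\mathbf t}\le\min\{1,\alpha\}$ always. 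Since a countable union of $\mathcal L_N$-null sets is null, I would fix a bounded open cube $U\subset\mathbb R^N$, establish both conclusions for a.e.\ $\mathbf t\in U$, and finish by exhausting $\mathbb R^N$ by such cubes.

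\emph{Natural measures.} Strong connectivity of $\mathcal G$ makes each $C^{(s)}$ of \eqref{ct50} irreducible, so Perron--Frobenius provides a strictly positive right eigenvector $\mathbf u^{(s)}=(u^{(s)}_1,\dots,u^{(s)}_q)$ for the eigenvalue $\varrho(C^{(s)})$. Using it I would put on the space $\Sigma$ of infinite directed paths the Markov measure $\nu_s$ with transition probability $|r_e|^{s}u^{(s)}_{t(e)}\big/\big(\varrho(C^{(s)})u^{(s)}_{s(e)}\big)$ along each edge $e$ -- these sum to $1$ over the edges leaving any vertex precisely because $\mathbf u^{(s)}$ is an eigenvector -- and telescoping yields $\nu_s([e_1\cdots e_n])\asymp|r_{e_1\cdots e_n}|^{s}\,\varrho(C^{(s)})^{-n}$, with constants depending only on $\mathbf u^{(s)}$. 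Because all slopes are positive, the coding map of the GDIFS is affine in the translation vector:
\begin{equation*}
\pi^{\mathbf t}(e_1e_2\cdots)=\sum_{n\ge1}r_{e_1}\cdots r_{e_{n-1}}\,t_{e_n}=\big\langle\mathbf a(e_1e_2\cdots),\mathbf t\big\rangle,\qquad \mathbf a(\mathbf e)_e:=\sum_{n:\,e_n=e}r_{e_1}\cdots r_{e_{n-1}}.
\end{equation*}
I set $\mu^{\mathbf t}_s:=(\pi^{\mathbf t})_*\nu_s$, a Borel probability measure carried by $\Lambda^{\mathbf t}$.

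\emph{The crux: transversality.} The heart of the argument is the estimate that there is $C=C(U)<\infty$ such that, writing $\mathbf i\wedge\mathbf j$ for the longest common prefix of $\mathbf i,\mathbf j\in\Sigma$,
\begin{equation}\label{eq:transv-plan}
\mathcal L_N\big\{\mathbf t\in U:\ |\pi^{\mathbf t}(\mathbf i)-\pi^{\mathbf t}(\mathbf j)|\le\delta\big\}\ \le\ \frac{C\,\delta}{|r_{\mathbf i\wedge\mathbf j}|}\qquad\text{for all }\mathbf i\ne\mathbf j,\ \delta>0,
\end{equation}
which, by layer-cake integration, gives $\int_U|\pi^{\mathbf t}(\mathbf i)-\pi^{\mathbf t}(\mathbf j)|^{-s}\,d\mathbf t\le C'(U)\,|r_{\mathbf i\wedge\mathbf j}|^{-s}$ for every $0\le s<1$ (finiteness needs $s<1$). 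Since $\mathbf t\mapsto\pi^{\mathbf t}(\mathbf i)-\pi^{\mathbf t}(\mathbf j)$ is a linear functional, equal to $|r_{\mathbf i\wedge\mathbf j}|$ times $\langle\mathbf v,\mathbf t\rangle$ for the normalised difference $\mathbf v$ of the two coefficient vectors, \eqref{eq:transv-plan} is equivalent to a uniform lower bound $\|\mathbf v\|\ge c>0$: if $a\ne b$ denote the first edges of the two tails after the common prefix (they share a source vertex), then the $t_a$-coordinate of $\mathbf v$ equals $1$ plus correction terms that are under control precisely because the slopes are \emph{positive}. I expect \eqref{eq:transv-plan} -- together with checking that the degenerate pairs (those with $\mathbf v=\mathbf 0$, i.e.\ exact overlaps, a phenomenon that depends on the slopes alone) carry no $\nu_s\times\nu_s$-mass -- to be the main obstacle; everything after it is a standard potential-theoretic template.

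\emph{Conclusion.} For (a), fix $s<\min\{1,\alpha\}$, so that $\varrho(C^{(s)})>\varrho(C^{(\alpha)})=1$ by the Mauldin--Williams monotonicity and Definition~\ref{cv65}. By Tonelli and \eqref{eq:transv-plan},
\begin{equation*}
\int_U\iint_{\Sigma\times\Sigma}\frac{d\nu_s(\mathbf i)\,d\nu_s(\mathbf j)}{|\pi^{\mathbf t}(\mathbf i)-\pi^{\mathbf t}(\mathbf j)|^{s}}\,d\mathbf t\ \le\ C'(U)\iint_{\Sigma\times\Sigma}|r_{\mathbf i\wedge\mathbf j}|^{-s}\,d\nu_s(\mathbf i)\,d\nu_s(\mathbf j)\ \asymp\ \sum_{n\ge1}\varrho(C^{(s)})^{-(n-1)}<\infty,
\end{equation*}
where the middle comparison splits over the common prefix $w$, bounds the $\nu_s\times\nu_s$-mass of pairs with $\mathbf i\wedge\mathbf j=w$ by $\nu_s([w])^2$, and uses $\nu_s([w])\asymp|r_w|^{s}\varrho(C^{(s)})^{-|w|}$ together with $\sum_{|w|=n-1}|r_w|^{s}\asymp\varrho(C^{(s)})^{n-1}$. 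Hence $\mu^{\mathbf t}_s$ has finite $s$-energy for $\mathcal L_N$-a.e.\ $\mathbf t\in U$, so by the energy form of Frostman's lemma $\dim_{\rm H}\Lambda^{\mathbf t}\ge s$ there; letting $s\uparrow\min\{1,\alpha\}$ along a sequence and $U\uparrow\mathbb R^N$ proves (a). For (b), assume $\alpha>1$, whence $\varrho(C^{(1)})>1$, and work with $\nu:=\nu_1$, $\mu^{\mathbf t}:=\mu^{\mathbf t}_1$. By Fatou, Tonelli, \eqref{eq:transv-plan} with $\delta=r$, and monotone convergence as $r\downarrow0$,
\begin{equation*}
\int_U\int_{\mathbb R}\liminf_{r\downarrow0}\frac{\mu^{\mathbf t}(B(x,r))}{2r}\,d\mu^{\mathbf t}(x)\,d\mathbf t\ \le\ \frac{C}{2}\iint_{\Sigma\times\Sigma}\frac{d\nu(\mathbf i)\,d\nu(\mathbf j)}{|r_{\mathbf i\wedge\mathbf j}|}\ \asymp\ \sum_{n\ge1}\varrho(C^{(1)})^{-(n-1)}<\infty,
\end{equation*}
so for $\mathcal L_N$-a.e.\ $\mathbf t\in U$ the lower density of $\mu^{\mathbf t}$ is finite $\mu^{\mathbf t}$-a.e., which forces $\mu^{\mathbf t}\ll\mathcal L_1$ by a standard differentiation argument; since $\mu^{\mathbf t}(\Lambda^{\mathbf t})=1$, this gives $\mathcal L_1(\Lambda^{\mathbf t})>0$, and exhausting $\mathbb R^N$ by cubes finishes (b).
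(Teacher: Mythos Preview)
The paper does not prove Theorem~\ref{sf72}: it is quoted verbatim as Theorem~1 of \cite{keane2003dimension} and used as a black box (notably in the proof of Theorem~\ref{sf63}). There is therefore no ``paper's own proof'' to compare against.

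That said, your outline is exactly the transversality scheme that the cited paper of Keane, Simon and Solomyak employs: push forward a Perron--Frobenius/Markov measure on the path space, exploit the affine dependence of $\pi^{\mathbf t}$ on $\mathbf t$ (which is where positivity of the slopes enters), and run the standard energy/absolute-continuity machinery. Your identification of the upper bound as free and the potential-theoretic wrap-up as routine is accurate.

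You have also correctly located the one genuine gap in your sketch: the uniform transversality bound \eqref{eq:transv-plan}. Your heuristic that ``the $t_a$-coordinate of $\mathbf v$ equals $1$ plus controlled corrections'' does not quite close as written: if $a\ne b$ are the first edges of the two tails, the $t_a$-coefficient of the normalised difference is
\[
1+r_a\cdot(\text{nonneg})-r_b\cdot(\text{nonneg}\le (1-r_{\max})^{-1}),
\]
and $1-r_b/(1-r_{\max})$ need not be positive without a smallness assumption on the ratios. In \cite{keane2003dimension} this is handled not by bounding a single coordinate but by showing that the full coefficient vector $\mathbf v$ cannot be small: one uses that $\pi^{\mathbf t}(\mathbf i')$ and $\pi^{\mathbf t}(\mathbf j')$ lie in cylinder intervals $F_a^{\mathbf t}(I_{t(a)})$ and $F_b^{\mathbf t}(I_{t(b)})$ whose positions move with unit speed in $t_a$ and $t_b$ respectively, and the linear functional cannot vanish identically because it separates these cylinders for an open set of $\mathbf t$. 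Filling in that step (and, as you note, disposing of the exact-overlap pairs) is what remains; the rest of your plan is sound.
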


To state another useful theorem on GDIFS, we need to define a separation condition, that is mostly used for one dimensional self- similar iterated function systems. 
Hochman \cite{hochman2014self} introduced the notion of exponential separation for self-similar IFSs. To state it, first we need to define the distance of two similarity mappings
$g_1(x)=\varrho_1x+\tau_1$ and $g_2(x)=\varrho_2x+\tau_2$, $\varrho_1,\varrho_2\in (-1,1)\setminus \left\{0\right\}$,
 on
$\mathbb{R}$. Namely,
\begin{equation}\label{cv92}
  \mathrm{dist}\left(g_1,g_2\right):=
  \left\{
    \begin{array}{ll}
      |\tau_1-\tau_2|, & \hbox{if $\varrho_1=\varrho_2$;} \\
      \infty , & \hbox{otherwise.}
    \end{array}
  \right.
\end{equation}
\begin{definition}\label{cv91}
Given a self-similar IFS $\mathcal{F}=\left\{f_k(x)\right\}_{k=1}^{m}$ on $\mathbb{R}$.
We say that $\mathcal{F}$ satisfies the \texttt{Exponential Separation Condition (ESC)} if there exists a $c>0 $ and a strictly increasing sequence of natural numbers $\left\{n_\ell \right\}_{\ell =1}^{\infty }$ such that
\begin{equation}\label{cv90}
\mathrm{dist}\left(f_{\iiv},f_{\jjv}\right)  \geq c^{n_{\ell }} \mbox{ for all }
\ell  \mbox{ and for all } \iiv,\jjv\in \left\{1, \dots ,M\right\}^{n_\ell },\ \iiv\ne\jjv.
\end{equation}
\end{definition}
We note that the exponential separation condition always holds when an IFS is parametrized by algebraic parameters \cite{hochman2014self}.

\begin{definition}\label{cr35}
  Let $\mathcal{F}=\{F_e\}_{e\in \mathcal{E}}$ be a self-similar GDIFS, with edge set $\mathcal{E}$.
  We call $\mathcal{S}=\left\{S_k(x)=r_{e_k}x+t_{e_k}\right\}_{k=1}^{M}$ \texttt{the self-similar IFS associated with } $\mathcal{F}$. Clearly,
  \begin{equation}\label{cr36}
    S_{k}|_{\Lambda_{t(e_k)}}\equiv F_{e_k}|_{\Lambda_{t(e_k)}}.
  \end{equation}
\end{definition}

We proved in \cite{prokaj2021piecewise}, that the dimensions of the attractor of a self-similar GDIFS are all equal if its generated self-similar IFS satisfies the ESC.   
\begin{theorem}[Corollary 7.2 of \cite{prokaj2021piecewise}]\label{sf77}
  Let $\mathcal{G}=(\mathcal{V},\mathcal{E})$ be a directed graph, and let
  $\mathcal{F}=\left\{F_e\right\}_{e\in \mathcal{E}}$ be a self-similar GDIFS on $\mathbb{R}$ with attractor $\Lambda$. 
  Assume that $\mathcal{G}$ is strongly connected, and that the self-similar IFS $\mathcal{S}$ associated to $\mathcal{F}$ satisfies the ESC. Then
  \begin{equation}\label{cs15}
    \dim_{\rm H}\Lambda = \dim_{\rm B}\Lambda = \min\{1,\alpha\}.
  \end{equation}
\end{theorem}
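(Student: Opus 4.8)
The plan is to prove the two bounds $\dim_{\rm H}\Lambda\le\dim_{\rm B}\Lambda\le\min\{1,\alpha\}$ and $\dim_{\rm H}\Lambda\ge\min\{1,\alpha\}$ separately; together they force $\dim_{\rm H}\Lambda=\dim_{\rm B}\Lambda=\min\{1,\alpha\}$. For the upper bound I would note that $\dim_{\rm H}\Lambda\le\dim_{\rm B}\Lambda$ is general and $\dim_{\rm B}\Lambda\le1$ because $\Lambda\subset\mathbb R$, so it only remains to show $\overline{\dim}_{\rm B}\Lambda\le\alpha$. This I would obtain from the natural covering system: for $\delta>0$ cover each $\Lambda_i$ using \eqref{sf86} by the cylinders $F_{e_1\dots e_n}(I_{t(e_n)})$ (where $I_k$ is the interval spanned by $\Lambda_k$) with $e_1\dots e_n$ running over the admissible paths from $i$ that are stopped at scale $\delta$, i.e.\ $|r_{e_1\dots e_n}|\le\delta<|r_{e_1\dots e_{n-1}}|$; each such set has diameter $\asymp\delta$, and since $\varrho(C^{(\alpha)})=1$ the Perron--Frobenius theory for the irreducible matrix $C^{(\alpha)}$ bounds the number of stopped cylinders by $O(\delta^{-\alpha})$, giving $\overline{\dim}_{\rm B}\Lambda_i\le\alpha$ and hence, by finite stability, $\overline{\dim}_{\rm B}\Lambda\le\alpha$. (Theorem \ref{cs14}(a) already supplies the Hausdorff half $\dim_{\rm H}\Lambda\le\alpha$.)

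For the lower bound I would exhibit a measure on $\Lambda$ of dimension $\min\{1,\alpha\}$. Let $\Sigma_{\mathcal G}\subset\mathcal E^{\mathbb N}$ be the space of infinite admissible paths with coding map $\pi$ sending a path $e_1e_2\cdots$ from a vertex $i$ to the unique point of $\bigcap_{n\ge1}F_{e_1\dots e_n}(I_{t(e_n)})\subset\Lambda_i$. Let $u,w$ be the left and right positive Perron eigenvectors of $C^{(\alpha)}$ for the eigenvalue $\varrho(C^{(\alpha)})=1$ from \eqref{cv64}, normalised so that $\sum_i u_iw_i=1$, and let $\nu$ be the shift-invariant Markov measure on $\Sigma_{\mathcal G}$ with stationary vertex law $p_i:=u_iw_i$ and edge transitions $P(e):=|r_e|^{\alpha}w_{t(e)}/w_{s(e)}$; the identities $C^{(\alpha)}w=w$ and $u^{\top}C^{(\alpha)}=u^{\top}$ make this well defined and stationary. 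Put $\mu:=\pi_*\nu$, a probability measure on $\Lambda$. The entropy $h(\nu)=-\sum_i p_i\sum_{e\colon s(e)=i}P(e)\log P(e)$ and the Lyapunov exponent $\chi(\nu)=-\sum_i p_i\sum_{e\colon s(e)=i}P(e)\log|r_e|$ are then related by $h(\nu)=\alpha\,\chi(\nu)$: one writes $\log P(e)=\alpha\log|r_e|+\log w_{t(e)}-\log w_{s(e)}$ and checks that stationarity makes the $\log w$--terms telescope to zero. Hence the ``expected'' dimension of $\mu$ is $\min\{1,h(\nu)/\chi(\nu)\}=\min\{1,\alpha\}$.

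To see that $\mu$ actually attains this value I would invoke the separation hypothesis. By Definition \ref{cr35} the associated self-similar IFS $\mathcal S=\{S_k(x)=r_{e_k}x+t_{e_k}\}$ satisfies the ESC, and by \eqref{cr36} the generation-$n$ branch maps $F_{e_1\dots e_n}$ of the GDIFS agree on the relevant parts of $\Lambda$ with $n$-fold compositions from $\mathcal S$; therefore exponential separation of the $n$-fold compositions of $\mathcal S$ transfers to exponential separation of the distinct generation-$n$ branches of the GDIFS. Feeding this into the graph-directed version of Hochman's inverse theorem on the growth of entropy under convolution --- the content of Section~7 of \cite{prokaj2021piecewise}, which in the one-vertex case is exactly Hochman's theorem \cite{hochman2014self} --- one concludes that there is no dimension drop, so $\dim_{\rm H}\mu=\min\{1,\alpha\}$, and since $\mu$ is carried by $\Lambda$ this gives $\dim_{\rm H}\Lambda\ge\min\{1,\alpha\}$.

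The hard part is precisely this last step: upgrading Hochman's inverse theorem from a single self-similar IFS to the Markovian/graph-directed setting. One must run the entropy-porosity and Erd\H{o}s--Kahane type estimates along admissible paths of $\mathcal G$, where the composed maps are not drawn i.i.d.\ from a fixed finite list but vary with the graph; what rescues the argument is that, by \eqref{cr36} and strong connectedness, the family of generation-$n$ atoms is still governed by the single self-similar IFS $\mathcal S$, so the exponential-separation input of Hochman applies and the super-exponential concentration of atoms --- the one mechanism that could lower the dimension --- is excluded.
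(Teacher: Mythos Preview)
The paper does not prove this theorem; it is quoted verbatim as Corollary~7.2 of \cite{prokaj2021piecewise} and used as a black box, so there is no proof here to compare against. Your sketch is a reasonable outline of how such a result is obtained and matches the general shape one expects from the cited reference: the upper bound is the standard Mauldin--Williams covering argument (already recorded as Theorem~\ref{cs14}(a)), and the lower bound comes from the Mauldin--Williams Markov measure with $h/\chi=\alpha$ together with a graph-directed adaptation of Hochman's entropy-growth theorem to exclude dimension drop. You correctly flag that this last step is the real content, and you defer it to ``Section~7 of \cite{prokaj2021piecewise}'' --- which is precisely the source being cited, so your argument is not independent of the reference.

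Two small points. First, the admissible generation-$n$ maps of the GDIFS form a \emph{subset} of the $n$-fold compositions from $\mathcal S$, so the ESC on $\mathcal S$ certainly separates them; your transfer claim is fine. Second, your last paragraph is more heuristic than argument: the phrase ``the family of generation-$n$ atoms is still governed by the single self-similar IFS $\mathcal S$'' glosses over the fact that the measure $\mu$ is \emph{not} self-similar for $\mathcal S$ (it is only Markov), so Hochman's theorem for $\mathcal S$ does not apply to $\mu$ directly; one genuinely needs the graph-directed version, not merely the separation input. That is exactly the work done in the cited paper, and your sketch does not supply it.
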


According to \cite[Theorem~1.10]{Hochman_2015}, the ESC is a $\dim_{\rm P}$-typical property. Hence, Theorem \ref{sf77} extends part (a) of Theorem \ref{sf72} to a wider set of translation and contraction parameters, as the positivity of the slopes is not required anymore.

\begin{remark}\label{sf71}
  Let $\mathcal{F}^{\mathbf{t}}$ be a self-similar GDIFS with directed graph $\mathcal{G}^{\mathbf{t}}$ and attractor $\Lambda^{\mathbf{t}}$. Suppose that  $\mathcal{G}^{\mathbf{t}}$ is strongly connected. Then for a $\dim_{\rm P}$-typical translation vector $\mathbf{t}$ 
  \[
    \dim_{\rm H}\Lambda^{\mathbf{t}} = \dim_{\rm B}\Lambda^{\mathbf{t}} = \min\{1,\alpha\}.
  \]
\end{remark}

In \cite{prokaj2021piecewise}, we showed how to associate a self-similar GDIFS to a regular CPLIFS. This association made it possible to prove that the dimensions of the attractor are equal. 
In the next section, we will associate a self-similar graph-directed iterated function system to some non-regular CPLIFS. This way the theorems showcased in this section can be used to prove results on the attractor.

\section{Breaking points with periodic coding}\label{sf90}

Throuhout this section we will work only with CPLIFS $\mathcal{F}$ that satisfies the following assumption.
\begin{assumption}\label{sf85}
  If a breaking point $b$ of a function in $\mathcal{F}$ falls onto the attractor $\Lambda_{\mathcal{F}}$, then it only has periodic codings in the symbolic space. Precisely,
  \begin{equation}\label{sf84}
    \exists \mathbf{i}\in \Sigma : \Pi(\mathbf{i})=b \implies 
    \mathbf{i} \mbox{ is periodic},
  \end{equation}
  where $\Pi :\Sigma\to\Lambda$ denotes the natural projection defined by $\mathcal{F}$.
\end{assumption}

Let $\mathcal{F}=\{f_k\}_{k=1}^m$ be a CPLIFS that satisfies Assumption \ref{sf85}. Let $b_1, \dots b_Q$ be those breaking points of $\mathcal{F}$ that fall onto the attractor $\Lambda:=\Lambda_{\mathcal{F}}$. According to \eqref{sf84}, they can only have periodic codes: $\mathbf{i}_1, \dots, \mathbf{i}_{Q^{'}}$. Note that, as we have no separation condition on $\mathcal{F}$, some breaking points might have multiple codes, hence $Q \leq Q^{'}$. Further, we write $p_j<\infty$ for the period of $\mathbf{i}_j, \forall j\in[Q^{'}]$ and $P$ for the smallest common multiplier of the numbers $p_1,\dots p_{Q^{'}}$. 

Now we have all the necessary notations to associate a self-similar GDIFS to $\mathcal{F}$. 
Consider the cylinders of level $P$: 
\begin{equation}\label{sf83}
  \Lambda^P := \left\{ f_{\mathbf{j}}(\Lambda): \forall \mathbf{j}\in\Sigma^P \right\}.
\end{equation}
For a $\mathbf{j}\in\Sigma^P$, we call $\phi_A$ the fixed point of the set $A\in \Lambda^P$ if $A=f_{\mathbf{j}}(\Lambda)$ and $f_{\mathbf{j}}(\phi_A)=\phi_A$.

We construct the graph-directed sets from the elements of $\Lambda_P$ in the following way:
\begin{enumerate}
  \item If $A\in\Lambda^P$ does not contain any of the the points $b_1,\dots, b_Q$, then $A$ is a graph-directed set;
  \item If $A\in\Lambda^P$ contains a breaking point as an inner point, then we cut $A$ into two new closed sets $A^{-},A^{+}$ by its fixed point $\phi_A$. The sets $A^{-},A^{+}$ are graph-directed sets. 
\end{enumerate}
That is, we can define the set $G$ of graph-directed sets in the following way. 
\begin{align}\label{sf82}
  \forall A\in \Lambda^P: 
  \forall j\in[Q]: & \; b_j\not\in A \implies A\in G \nonumber\\
  \exists j\in[Q]: & \; b_j\in A \implies A^{-},A^{+}\in G. \nonumber
\end{align}
We say that $\mathbf{i}_A\in\Sigma^P$ is the code of the set $A\in\Lambda^P$ if $\Pi(\mathbf{i}_A)=A$. This way we can define the code of graph directed sets as well. Note that the graph directed sets $A_1,A_2\in G$ will share the same code if $A_1=A_{-}$ and $A_2=A_{+}$ for some $A\in\Lambda^P$.

\begin{lemma}\label{sf81}
  The elements of $G$ do not contain any breaking point as an inner point. 
\end{lemma}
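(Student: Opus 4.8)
The plan is to reduce Lemma~\ref{sf81} to the following structural fact: \emph{whenever a breaking point of $\mathcal{F}$ lies in a level-$P$ cylinder $A\in\Lambda^P$, that breaking point equals the fixed point $\phi_A$}. I would first record the elementary containment $f_{\mathbf{j}}(\Lambda)\subseteq\Lambda$, valid for every finite word $\mathbf{j}$, obtained by iterating \eqref{sf95}; consequently every $A\in\Lambda^P$, and hence every element of $G$, is a subset of $\Lambda$, so any breaking point it contains automatically lies on $\Lambda$ and is therefore one of $b_1,\dots,b_Q$. Granting the structural fact, the lemma follows at once from the dichotomy \eqref{sf82}: if $A\in G$ was left uncut then by \eqref{sf82} it contains none of $b_1,\dots,b_Q$, hence no breaking point of $\mathcal{F}$ at all; and if $A$ was cut at $\phi_A$ into $A^-$ and $A^+$, the structural fact shows that every breaking point of $\mathcal{F}$ lying in $A$ coincides with $\phi_A$, which, being equal to $\max A^-=\min A^+$, is an inner point of neither $A^-$ nor $A^+$ (and $A^{\pm}\subseteq A$ contains no other breaking point).

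To prove the structural fact, write $A=f_{\mathbf{k}}(\Lambda)$ with $\mathbf{k}\in\Sigma^P$ and suppose $b_j\in A$ for some $j\in[Q]$. Since $b_j\in f_{\mathbf{k}}(\Lambda)$, there is $\mathbf{y}\in\Sigma$ with $b_j=f_{\mathbf{k}}(\Pi(\mathbf{y}))=\Pi(\mathbf{k}\mathbf{y})$, so $\mathbf{k}\mathbf{y}$ is a coding of the breaking point $b_j\in\Lambda$. By Assumption~\ref{sf85} it is periodic, hence $\mathbf{k}\mathbf{y}$ is one of $\mathbf{i}_1,\dots,\mathbf{i}_{Q'}$ and its period divides $P$, because $P$ is the least common multiple of $p_1,\dots,p_{Q'}$. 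A sequence whose period divides $P$ is the infinite repetition of its first $P$ symbols, and the first $P$ symbols of $\mathbf{k}\mathbf{y}$ are precisely the block $\mathbf{k}$; therefore $\mathbf{k}\mathbf{y}=\overline{\mathbf{k}}$, where $\overline{\mathbf{k}}$ denotes the periodic sequence $\mathbf{k}\mathbf{k}\mathbf{k}\cdots$. Hence $b_j=\Pi(\overline{\mathbf{k}})=\Pi(\mathbf{k}\,\overline{\mathbf{k}})=f_{\mathbf{k}}\bigl(\Pi(\overline{\mathbf{k}})\bigr)=f_{\mathbf{k}}(b_j)$, so $b_j$ is the unique fixed point of the contraction $f_{\mathbf{k}}$, i.e.\ $b_j=\phi_A$. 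In particular two distinct breaking points cannot both lie in $A$, so ``the breaking point in $A$'' is well defined in the cut case above.

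The only real content is this structural fact, and the step I expect to require the most care is recognising that choosing $P$ as the least common multiple of the periods $p_1,\dots,p_{Q'}$ is exactly what forces \emph{every} coding of $b_j$ coming from a level-$P$ cylinder $f_{\mathbf{k}}(\Lambda)$ to be the $P$-periodic repetition of $\mathbf{k}$, which is what pins $b_j$ to the fixed point of $f_{\mathbf{k}}$. The remainder is bookkeeping: the containment $f_{\mathbf{j}}(\Lambda)\subseteq\Lambda$, the intertwining identity $\Pi(\mathbf{k}\mathbf{w})=f_{\mathbf{k}}(\Pi(\mathbf{w}))$ for the natural projection, and the observation that $\phi_A$ is an endpoint of each of $A^-$ and $A^+$.
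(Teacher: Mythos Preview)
Your proof is correct and follows essentially the same approach as the paper: the key step in both is the structural fact that $b_j\in A\in\Lambda^P$ forces $b_j=\phi_A$, which the paper asserts in one line while you spell out the periodicity argument (any coding $\mathbf{k}\mathbf{y}$ of $b_j$ has period dividing $P$, hence equals $\overline{\mathbf{k}}$). You are slightly more careful than the paper in first observing that $G\subset\Lambda$, so that the only breaking points that could appear are $b_1,\dots,b_Q$; the paper leaves this implicit.
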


\begin{proof}
  We assumed that all of the codes $\mathbf{i}_1, \dots, \mathbf{i}_{Q^{'}}$ are periodic, and $P$ was defined as the smallest common multiplier of their periods. As $\mathcal{F}$ satisfies Assumption \ref{sf85}, if $j\in[Q], b_j$ is contained in $A\in\Lambda^P$, then $b_j=\phi_A$.
  
  Since we cut the corresponding elements of $\Lambda_P$ into two by their fixed points, it follows that the elements of $G$ can only contain a breaking point $b_j$ as a boundary point.
\end{proof}

To associate a GDIFS to $\mathcal{F}$, we need a directed graph $\mathcal{G}=\{\mathcal{V},\mathcal{E}\}$. We already defined the graph-directed sets as the elements of $G$. 
Accordingly, we define the set of nodes as $\mathcal{V}=\{1,\dots ,\vert G\vert\}$.
For an arbitrary graph directed set $A\in G$, let $\mathbf{i}_{A}\in\Sigma^P$ be its code and $q_A\in \mathcal{V}$ be the node in the graph representing this set. 
The set of edges $\mathcal{E}$ is defined the following way
\begin{equation}\label{sf70}
    A, A^{'}\in G \mbox{, and } f_{\mathbf{i}_A}(A^{'})\in A \implies (A,A^{'})\in \mathcal{E}.
\end{equation}
For an edge $e=(A,A^{'})\in \mathcal{E}$, we define the corresponding contraction as $f_e=f_{\mathbf{i}_A}$. We call the graph-directed system defined by $\mathcal{G}=(\mathcal{V},\mathcal{E})$ and $\{f_e\}_{e\in \mathcal{E}}$ the \texttt{associated graph-directed system of $\mathcal{F}$}, and we denote it by $\mathcal{F}_{\mathcal{G}}$. According to Lemma \ref{sf81}, $\mathcal{F}_{\mathcal{G}}$ is always self-similar. We just obtained the following result.

\begin{theorem}\label{sf69}
   Let $\mathcal{F}$ be a CPLIFS with attractor $\Lambda$. If $\mathcal{F}$ satisfies Assumption \ref{sf85}, then $\Lambda$ is the attractor of a self-similar graph directed iterated function system.  
\end{theorem}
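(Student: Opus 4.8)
The graph-directed system $\mathcal{F}_{\mathcal{G}}=\big(\mathcal{G},\{f_e\}_{e\in\mathcal{E}}\big)$ has already been written down above, so the plan is simply to check that it is a bona fide self-similar GDIFS in the sense of Subsection~\ref{sf92} and that its family of graph-directed sets is exactly $\{A:A\in G\}$. Once these are established, the attractor of $\mathcal{F}_{\mathcal{G}}$ is $\bigcup_{A\in G}A$, and since the solution of a graph-directed system of set equations is unique (the uniqueness part of \cite[Theorem~1.1]{mauldin1988hausdorff}), this union must coincide with $\Lambda$, which is the assertion of the theorem. So there are two things to verify: (i) self-similarity of $\mathcal{F}_{\mathcal{G}}$, i.e.\ that each $f_e$ restricted to the graph-directed set attached to $t(e)$ agrees with a similarity; and (ii) that the compact sets $\{A\}_{A\in G}$ satisfy the system \eqref{cv66}.

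For (i), fix an edge $e=(A,A')$ and write $\mathbf{i}_A=k_1\cdots k_P$, so $f_e=f_{\mathbf{i}_A}=f_{k_1}\circ\cdots\circ f_{k_P}$; I must show that this level-$P$ composition does not change slope across $A'$, so that replacing it by the composition of the relevant linear branches of the $f_{k_i}$ gives a similarity agreeing with $f_{\mathbf{i}_A}$ on $A'$. I would argue by downward induction on $n\in\{P+1,P,\dots,1\}$ that $g_n:=f_{k_n}\circ\cdots\circ f_{k_P}$ (with $g_{P+1}:=\mathrm{id}$) is affine on $A'$, maps $A'$ into $\Lambda$, and carries no breaking point of $\mathcal{F}$ into the interior of the convex hull of $g_n(A')$. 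The base case $n=P+1$ is exactly Lemma~\ref{sf81}. In the inductive step, $g_n=f_{k_n}\circ g_{n+1}$ is affine on $A'$ because $g_{n+1}$ is and, by the hypothesis on $g_{n+1}(A')$ together with Lemma~\ref{sf81}, $f_{k_n}$ is affine on the hull of $g_{n+1}(A')$; moreover $g_n(A')\subseteq f_{k_n}(\Lambda)\subseteq\Lambda$. The delicate point is showing $g_n(A')$ again has no interior breaking point: such a point would lie on $\Lambda$, hence be one of $b_1,\dots,b_Q$; pulling it back through the affine branch of $f_{k_n}$ gives a point of $\Lambda$ one of whose codes is obtained by deleting the first symbol of a code of that $b_j$, and since every code of $b_j$ is periodic with period dividing $P$ — this is precisely Assumption~\ref{sf85} together with the choice of $P$ — the pulled-back point is the fixed point of a level-$P$ cylinder, which by the construction (and Lemma~\ref{sf81}) is never an interior point of the hull of a piece of $G$, contradicting the inductive hypothesis on $g_{n+1}(A')$.

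For (ii), first note $\Lambda=\bigcup_{\mathbf{j}\in\Sigma^P}f_{\mathbf{j}}(\Lambda)=\bigcup_{C\in\Lambda^P}C=\bigcup_{A''\in G}A''$, because each $C\in\Lambda^P$ is either itself an element of $G$ or is cut into halves $C^-,C^+\in G$ with $C^-\cup C^+=C$. Now fix $A\in G$ with code $\mathbf{i}_A$. If the cylinder $A=f_{\mathbf{i}_A}(\Lambda)$ was not cut, then $A=\bigcup_{A''\in G}f_{\mathbf{i}_A}(A'')$ with each $f_{\mathbf{i}_A}(A'')\subseteq f_{\mathbf{i}_A}(\Lambda)=A$, so by \eqref{sf70} there is an edge from $A$ to each such $A''$ and \eqref{cv66} holds at $A$. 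If instead $A=A_0^-$ for a cut cylinder $A_0=f_{\mathbf{i}_A}(\Lambda)$, one checks — using that $\phi_{A_0}$ is a breaking point on $\Lambda$, hence by Lemma~\ref{sf81} never interior to a piece of $G$, and that $f_{\mathbf{i}_A}$ fixes $\phi_{A_0}$ and is affine on each $A''$ by part (i) — that each $f_{\mathbf{i}_A}(A'')$ lies entirely in $(-\infty,\phi_{A_0}]$ or entirely in $[\phi_{A_0},\infty)$; intersecting $A_0=\bigcup_{A''\in G}f_{\mathbf{i}_A}(A'')$ with $(-\infty,\phi_{A_0}]$ and using this one-sidedness yields $A_0^-=\bigcup\{f_{\mathbf{i}_A}(A''):f_{\mathbf{i}_A}(A'')\subseteq A_0^-\}$, i.e.\ \eqref{cv66} at $A_0^-$; the case $A=A_0^+$ is symmetric. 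Combining (i) and (ii) with the uniqueness of graph-directed sets gives $\Lambda=\bigcup_{A\in G}A=$ attractor of $\mathcal{F}_{\mathcal{G}}$.

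The main obstacle is step (i): proving that the level-$P$ composition attached to an edge is genuinely affine on the target graph-directed set. This is the only place where Assumption~\ref{sf85} and the precise choice of $P$ (a common multiple of the periods of all codes of the $b_j$'s) are essential; without them a single application of some $f_{k_i}$ could carry a harmless point of a piece of $G$ onto a breaking point of the next map and destroy affinity several levels down. A secondary, routine point is that the cuts must be respected exactly by the images $f_{\mathbf{i}_A}(A'')$, which is what the one-sidedness claim in (ii) ensures. Finally, if one additionally needs $\mathcal{G}$ to be strongly connected in order to later invoke Theorems~\ref{cs14} and~\ref{sf77}, one may pass to the strongly connected components of $\mathcal{G}$ and express $\Lambda$ as the corresponding finite union, noting that every vertex coming from an uncut cylinder already has an edge to every vertex.
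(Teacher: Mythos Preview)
Your plan follows the paper's construction exactly; the paper's own proof is the single line ``According to Lemma~\ref{sf81}, $\mathcal{F}_{\mathcal{G}}$ is always self-similar'', so you are right that the real work lies in verifying (i). But your inductive step for (i) does not close. When a breaking point $b_j$ appears in the interior of the hull of $g_n(A')$, you pull it back through $f_{k_n}$ to a point $x$ in the interior of the hull of $g_{n+1}(A')$ and observe, correctly, that $x$ is the fixed point of some level-$P$ cylinder. You then invoke Lemma~\ref{sf81} to conclude that $x$ is not interior to a piece of $G$ and declare a contradiction with the hypothesis on $g_{n+1}(A')$. This is the error: Lemma~\ref{sf81} and the cut construction concern only \emph{breaking points}, whereas $x$ is merely a shift of $b_j$ and need not be a breaking point at all. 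The fixed point of an uncut cylinder $C\in G$ is generically an interior point of $C$, and in any case $g_{n+1}(A')$ is not itself a piece of $G$, so neither Lemma~\ref{sf81} nor your inductive hypothesis (which also speaks only of breaking points) yields anything about $x$.

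The gap is genuine, not just a missing sentence. Take $m=2$, give $f_1$ a single breaking point $b\in\Lambda$ with unique code $\overline{12}$, and give $f_2$ no breaking point on $\Lambda$. Then $P=2$, the cylinder $C_{21}:=f_{21}(\Lambda)$ is not cut, and its fixed point $b':=\Pi(\overline{21})$ sits in the interior of its hull with $f_1(b')=b$. Hence $b$ lies in the interior of the hull of $f_1(C_{21})$, and the edge map $f_{11}$ attached to $(C_{11},C_{21})$ is \emph{not} affine on $C_{21}$: the construction as written does not produce a self-similar GDIFS in this example. What is missing---both in your argument and in the paper's one-line appeal to Lemma~\ref{sf81}---is that one must cut the level-$P$ cylinders not only at the $b_j$ themselves but at every point of their finite shift-orbits; only then does the pulled-back point $x$ always land on a cut, and your induction (with the hypothesis strengthened to exclude all such orbit points from the interior) can be carried out.
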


\subsection{Fixed points as breaking points}\label{cp70}

In general, we cannot give a formula for the Hausdorff dimension of the attractor of a CPLIFS, but we can in some special cases, using the previously described construction.
Here we demonstrate it on the case of CPLIFS $\mathcal{F}$ with the following properties:
\begin{enumerate}
    \item $\mathcal{F}$ is injective,
    \item the functions of $\mathcal{F}$ have positive slopes,
    \item the functions of $\mathcal{F}$ can only break at their fixed points,
    \item the first cylinder intervals are disjoint.
\end{enumerate}

We construct the associated directed graph, and then we give a recursive formula for the Hausdorff dimension of these systems.

Let $\mathcal{F}=\{f_i\}_{i=1}^m$ be an injective CPLIFS, and let $\Lambda$ be its attractor. Without loss of generality, we assume that the functions of $\mathcal{F}$ are strictly increasing. For each $i\in[m]$, we denote the fixed point of $f_i$ with $\phi_i$. We further assume that the only breaking point of $f_i$ is $\phi_i$, for every $i\in[m]$.
We call $f_1$ and $f_m$ the maps with the smallest and largest fixed points respectively. Then the interval $I$ that supports the attractor is defined by $\phi_1$ and $\phi_m$. From now on without loss of generality we suppose that $\phi_1=0$ and $\phi_m=1$.

\begin{figure}[b]
    \centering
    \includegraphics[width=13cm]{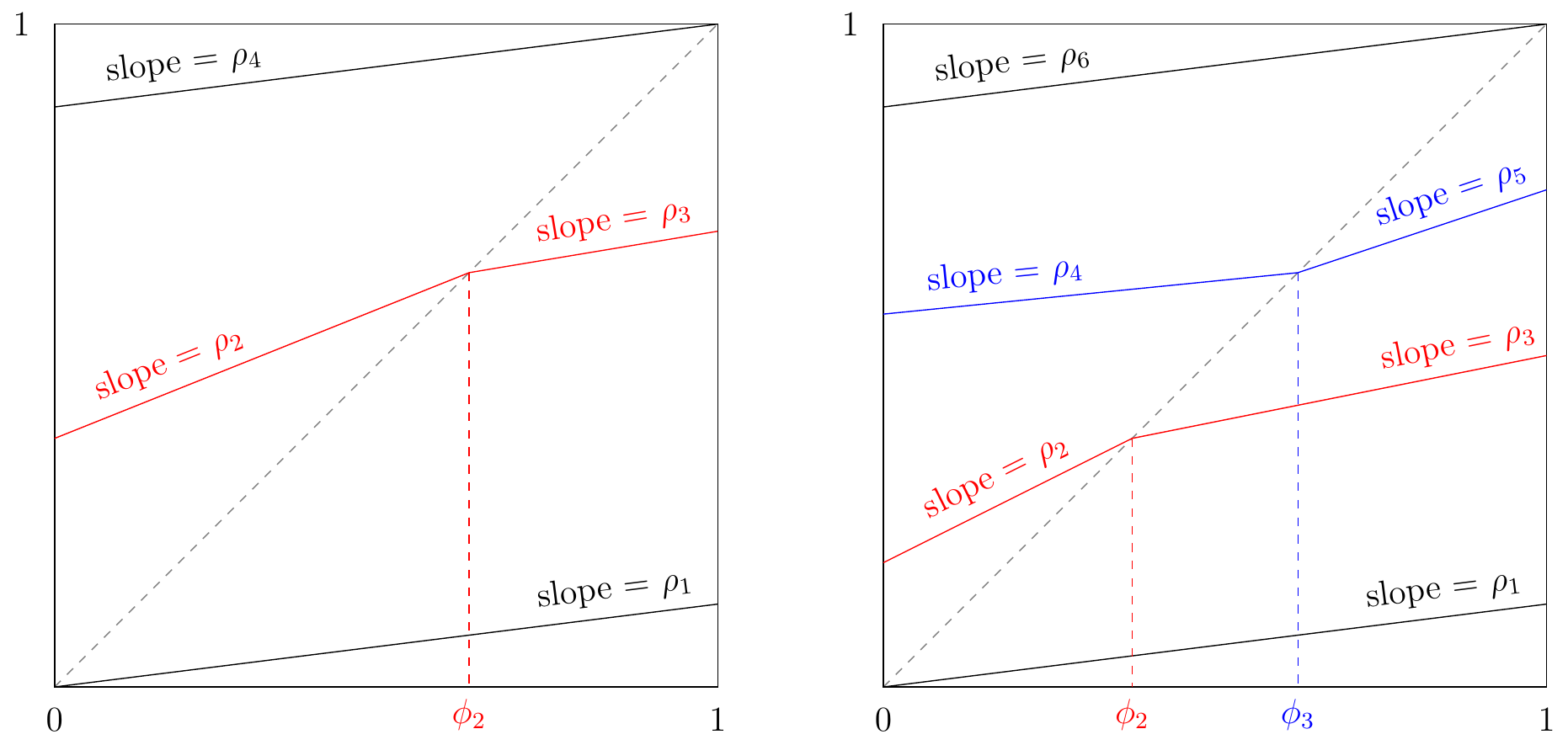}
    \caption{Illustration of CPLIFSs that we discuss in this section with $m=3$ and $4$ functions.}\label{sf56}
  \end{figure}

Let $m=3$, and define the functions of $\mathcal{F}$ as follows.

\begin{equation*}
\begin{split}
f_1(x) &=\rho_1 x, \\
f_3(x) &=\rho_4 x+ (1-\rho_4)
\end{split}
\quad
\begin{split}
f_2(x)= \begin{cases}
		\rho_2 x+\phi_2(1-\rho_2) \text{, if } x\in \left[ 0,\phi_2 \right] \\
		\rho_3 x+\phi_2(1-\rho_3) \text{, if } x\in \left[ \phi_2,1 \right]
		\end{cases}
\end{split}
\end{equation*}

We require here that $\rho_2 \neq \rho_3$ and that $\mathcal{F}$ satisfies the first cylinder intervals of the system are disjoint.
Clearly, $f_2$ only breaks at its fixed point $\phi_2$. Thanks to this property, all elements of the generated self-similar IFS are self-mappings of certain intervals. 
Namely, $S_{2,1}$ is a self-map of $[0,\phi_2]$ and $S_{2,2}$ is a self-map of $[\phi_2,1]$. It implies that we can associate a graph-directed function system with directed graph $\mathcal{G}=\{\mathcal{V},\mathcal{E}\}$, where $\mathcal{V}=[4]$ and the edges are defined by the incidence matrix
\[ \begin{bmatrix}
  1 &1 &1 &1 \\
  1 &1 &0 &0 \\
  0 &0 &1 &1 \\
  1 &1 &1 &1
  \end{bmatrix} .\]
As we did in \ref{ct50}, we define the following matrix
\[ \begin{bmatrix}
\rho_1^s &\rho_1^s &\rho_1^s &\rho_1^s \\
\rho_2^s &\rho_2^s &0 &0 \\
0 &0 &\rho_3^s &\rho_3^s \\
\rho_4^s &\rho_4^s &\rho_4^s &\rho_4^s
\end{bmatrix} .\]

With the help of Theorem \ref{cs14} and the Perron Frobenius Theorem, the Hausdorff dimension of $\Lambda$ equals to the solution of the following equation

\begin{align*}
0 &= \det C^{(s)}_{4}:=\det \begin{bmatrix}
\rho_1^s-1 &\rho_1^s &\rho_1^s &\rho_1^s \\
\rho_2^s &\rho_2^s-1 &0 &0 \\
0 &0 &\rho_3^s-1 &\rho_3^s \\
\rho_4^s &\rho_4^s &\rho_4^s &\rho_4^s-1
\end{bmatrix} \\
&=1 -\rho_1^s -\rho_2^s -\rho_3^s -\rho_4^s +\rho_1^s \rho_3^s +\rho_2^s \rho_3^s +\rho_2^s \rho_4^s.
\end{align*}

Thus the Hausdorff dimension of $\mathcal{F}$ is the unique number $s$ that satisfies the equation
\begin{equation}\label{cp78}
  Q_4(s):=1 -\rho_1^s -\rho_2^s -\rho_3^s -\rho_4^s +\rho_1^s \rho_3^s +\rho_2^s \rho_3^s +\rho_2^s \rho_4^s =0.
\end{equation}
We call $Q_4(s)$ the \texttt{determinant function} of $\mathcal{F}$.  
It is easy to check that \eqref{cp78} gives back the similarity dimension in the self-similar case $(\rho_2=\rho_3)$, thus it is a consistent extension of the dimension theory of self-similar systems.

In a similar fashion we write $Q_{2m-2}(s)$ for the determinant function of a CPLIFS with $m\geq 3$ functions and $C^{(s)}_{2m-2}$ for the corresponding matrix (the matrix of the associated GDIFS minus the appropriate dimensional identity matrix), to keep track of the number of different slopes as parameters in the notation. For instance, if $m=4$, then 
\begin{equation*}
    C^{(s)}_{2m-2} = C^{(s)}_6 = \begin{bmatrix}
        \rho_1^s-1 &\rho_1^s &\rho_1^s &\rho_1^s &\rho_1^s &\rho_1^s \\
        \rho_2^s &\rho_2^s-1 &0 &0 &0 &0 \\
        0 &0 &\rho_3^s-1 &\rho_3^s &\rho_3^s &\rho_3^s \\
        \rho_4^s &\rho_4^s &\rho_4^s &\rho_4^s-1 &0 &0 \\
        0 &0 &0 &0 &\rho_5^s-1 &\rho_5^s \\
        \rho_6^s &\rho_6^s &\rho_6^s &\rho_6^s &\rho_6^s &\rho_6^s-1
        \end{bmatrix}.
\end{equation*}

Fixing the slopes $\rho_1,\dots ,\rho_{2m-2}$ let us express $Q_{2m-2}(s)$ recursively, since this way the determinant of the upper left $2n\times 2n$ block in $C^{(s)}_{2m-2}$ equals to $Q_{2n}(s)$ for each $0<n<m-1$. After expanding the determinant of $C^{(s)}_{2m-2}$ by the second row from below we obtain the following formula 
\begin{equation}\label{cp74}
  Q_{2m-2}(s)=(1-\rho^s_{2m-2}-\rho^s_{2m-3})Q_{2m-4}(s) +\rho^s_{2m-2}\sum_{i=1}^{2m-4} (-1)^i Q_{2m-4,i}(s),
\end{equation}
where $Q_{2m-4,i}(s)$ is the determinant of the matrix obtained by erasing the $i$-th column of $C^{(s)}_{2m-4}$ and then adding the first $(2m-4)$ elements of the last column of $C^{(s)}_{2m-2}$, as a column vector, from the right. For example, $Q_{4,2}$ is the determinant of the following matrix
\begin{equation}\label{cp71}
  \begin{bmatrix}
    \rho_1^s-1 &\rho_1^s &\rho_1^s &\rho^s_1 \\
    \rho_2^s &0 &0 &0\\
    0 &\rho_3^s-1 &\rho_3^s &\rho^s_3 \\
    \rho_4^s &\rho_4^s &\rho_4^s-1 &0
    \end{bmatrix}.
\end{equation}

If we calculate $Q_{2m-2,i}(s)$ by expanding the corresponding determinant by the second from the last row, it is easy to see that depending on $i\in [2m-2]$ we obtain the following values 
\begin{multline}\label{cp73}
  Q_{2m-2,i}(s)=\\
  =\begin{cases}
    (1-\rho^s_{2m-2})Q_{2m-4,i}(s),\quad i\in [2m-4] \\
    \rho^s_{2m-3}(1-\rho^s_{2m-2})Q_{2m-4}(s),\quad i=2m-3 \\
    \rho^s_{2m-2}(\sum_{j=1}^{2m-4}(-1)^j Q_{2m-4,j}(s) -\rho^s_{2m-3}Q_{2m-4}(s)),\quad i=2m-2.
  \end{cases}
\end{multline}
With the help of \eqref{cp74} and \eqref{cp73} we can construct $Q_{2m-2}(s)$ for any $m\geq 3$. That is, with this recursive algorithm we can calculate the Hausdorff dimension of any CPLIFS that satisfies the IOSC if its functions only break at their respective fixed points.

\section{Connection to expansive systems}\label{sf89}

It is easy to see that for every IFS $\mathcal{F}$ there exists a unique "smallest" non-empty compact interval $I^{\mathcal{F}}$  which is sent into itself by all the mappings of $\mathcal{F}$:

 \begin{equation}
 \label{cr22}
I^{\mathcal{F}}:=\bigcap\left\{ J:
J\subset \mathbb{R}\mbox{ compact interval with }
f_k(J)\subset J \mbox{, for all }k\in[m]
 \right\},
 \end{equation}
 where $[m]:=\left\{ 1,\dots  ,m \right\}$. 
It is easy to see that 
\begin{equation}
\label{cr15}
\Lambda^{\mathcal{F}}=\bigcap\limits_{n=1}^{\infty}
\bigcup\limits_{(i_1,\dots  ,i_n)\in[m]^n}
I_{i_1\dots  i_n}^{\mathcal{F}} ,
\end{equation}
where $I_{i_1\dots  i_n}^{\mathcal{F}}:=
f_{i_1\dots  i_n}(I^{\mathcal{F}})$ are the \texttt{cylinder intervals}.

\begin{definition}\label{sf87}
  Let $\mathcal{F}=\{ f_k\}_{k=1}^m$ be a CPLIFS with first cylinder intervals $I_1,\dots ,I_m$. We say that $\mathcal{F}$ satisfies the \texttt{Interval Open Set Condition (IOSC)} if 
  \[
    \forall i,j\in[m]: I_i \bigcap I_j =\emptyset.
  \]  
\end{definition}

In \cite{prokaj2021piecewise} we showed that typically the breaking points of a CPLIFS do not fall onto the attractor, and in this case the dimensions of the attractor coincide. 
Assuming the interval open set condition we will show that the Hausdorff dimension of the attractor of an injective CPLIFS equals to its natural dimension, independently of the position of the breaking points. Then it follows from Corollary \ref{cr12} that the box and Hausdorff dimensions of the attractor are all equal.

To handle these piecewise linear systems we use the notion of P. Raith \cite{raith1994continuity} and F. Hofbauer \cite{hofbauer1996box}, and turn our attention to the associated expanding maps. 

Let $\mathcal{F} =\lbrace f_i\rbrace_{i=1}^m$ be an injective CPLIFS that satisfies the IOSC, and without loss of generality assume $I^{\mathcal{F}}=[0,1]$.
As usual, we write $I_{i_1\dots i_k} =f_{i_1\dots i_k}(I)$ and $\Lambda$ for the attractor of $\mathcal{F}$. Recall, that we denoted with $B(k)$ the set of breaking points of $f_k$. We simply write
$f_k(B(k)):=\lbrace f_k(x):\: x\in B(k)\rbrace$ for the set of the images of the breaking points of $f_k,\; k\in\left[ m\right]$.

Let $T: \cup_{k\in[m]} I_k \rightarrow \left[ 0,1\right]$ be defined as follows
\begin{equation}
\forall k\in\left[ m\right] \: :\:
\left( T\vert_{I_k}\right)^{-1} =f_k.
\label{eq:locinverse}
\end{equation}

\begin{figure}[h]
    \centering
    \includegraphics[width=13cm]{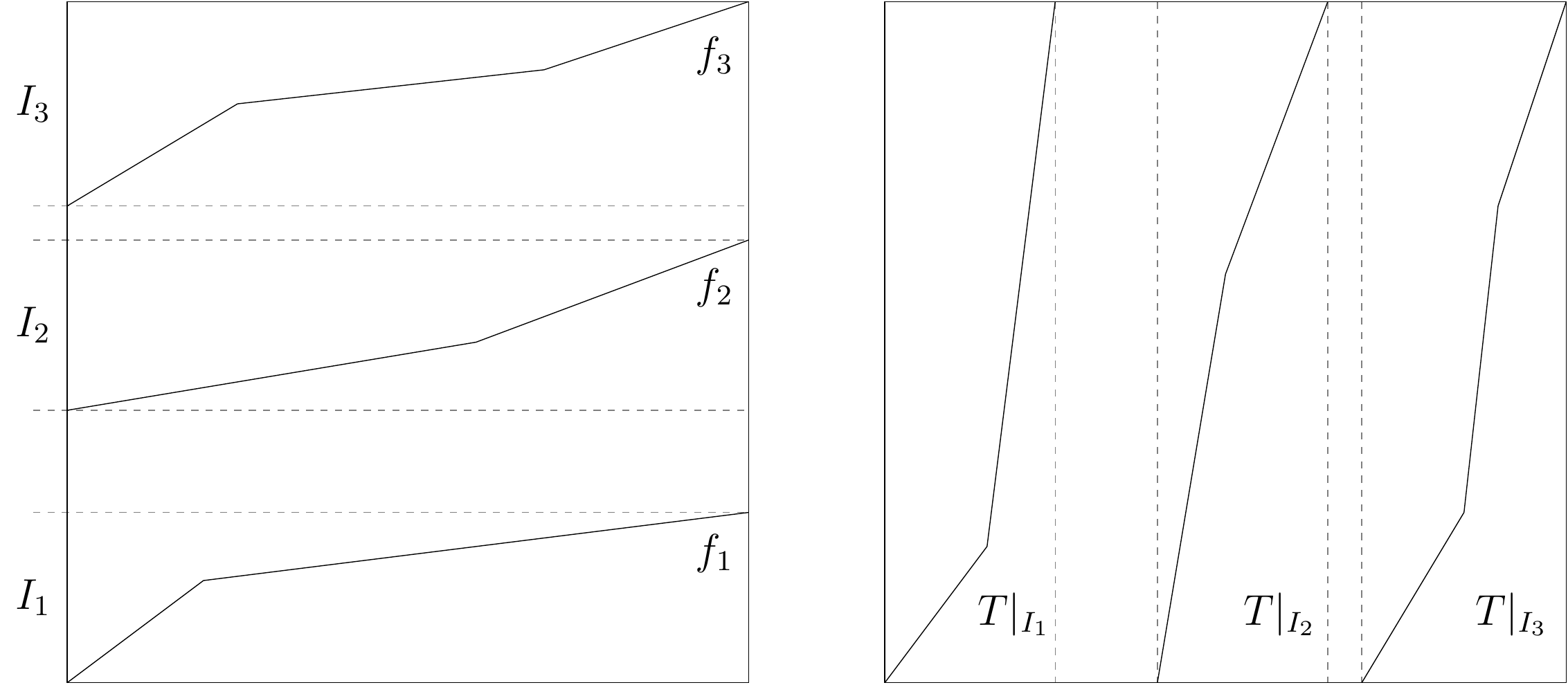}
    \caption{An injective CPLIFS that satisfies the IOSC and its associated expansive mapping.}\label{sf55}
\end{figure}

Write $W_0$ for the set of points where the derivative of $T$ is not defined. Hence $W_0=\bigcup_{k=1}^m \left( \lbrace f_k(0), f_k(1)\rbrace \cup f_k(B(k)) \right)$. We write $W$ for the set of preimages of the elements of $W_0$: 
$$W:=\left(\bigcup_{i=0}^{\infty}
T^{-i}(W_0\setminus\lbrace 0,1\rbrace )\right)\setminus \lbrace 0,1\rbrace.$$
Now let
$$ R = \bigcap_{n=0}^\infty \left( [0,1] \setminus
T^{-n} \left( [0,1] \setminus \bigcup_{k=1}^m  I_k \right) \right) .$$ %%Szerintem nekünk nem kell lezárni, mivel nem választhatja el csak 1 pont a nyílt gapeket.

Thus $R$ contains all the points whose orbit will never leave the union of the first cylinders as we iterate $T$. Observe that $R = \Lambda$.

Instead of $\left[ 0,1\right]$ we will work on a different metric space, obtained by doubling some points, that we denote with $\left[ 0,1\right]_W$. Namely, following \cite[p.~41]{raith1994continuity}, we double all elements of $W$, and equip this new space with the metric that induces the order topology. We call this new complete metric space \texttt{Doubled points topology}.
We write $R_W$ for the closure of $R\setminus W$ in the doubled points topology.
Let $T_W$ be the unique, continuous extension of our expanding map $T$ in this new metric space. Similarly, for a piecewise constant function $\psi$ let $\psi_W$ denote the unique continuous function for which
$\psi_W(x)=\psi(x),\; \forall x\in \left[ 0,1\right] \setminus(W\cup \lbrace 0,1\rbrace)$. We will call $\psi_W$ the completion of $\psi$.

Let $(X,d)$ be a compact metric space, $G:\: X\rightarrow X$ be a continuous mapping, and $g\in \mathcal{C}(X,\mathbb{R})$ be a continuous real valued function. The classical topological pressure is defined as
\begin{equation}\label{cp80}
p(R,G,g) = \lim_{\varepsilon\rightarrow 0}
\limsup_{n\rightarrow\infty} \frac{1}{n} \log \sup_E
\sum_{x\in E} \exp \left( \sum_{j=0}^{n-1} g(G^j x)\right),
\end{equation}
where the supremum is taken over all $(n,\varepsilon )$-separated subsets $E$ of $R$. A set $E\subset R$ is $(n,\varepsilon )$-separated, if for every $x\neq y\in E$ there exists a $j\in\lbrace 0,1,\dots ,n-1\rbrace$ with $d(T^j x,T^j y)>\varepsilon$, where $d$ is the metric on $X$ which induces the order topology.

In the doubled points topology $\psi_s(x)=-s\log \vert T^{\prime}(x) \vert$ has a continuous completion that we will denote by $\psi_{s,W}$. Thus the pressure function \eqref{cp80} is well defined for $X=R_W,\: G=T_W,\: g=\psi_{s,W}$, where the choice of $s\in\mathbb{R}$ is arbitrary. That is why we work on this new topological space.

According to \cite[Lemma 3]{raith1994continuity} the map
\begin{equation}\label{cp82}
P_{top}(s):= p(R_W,T_W,\psi_{s,W})
\end{equation}
is continuous and strictly decreasing.
Moreover, the root of this map $s_{\rm top}$ coincides with the Hausdorff dimension of $R_W$. Since $R$ and $R_W$ only differs in countably many points $\dim_H R=\dim_H R_W$.
We call the map $P_{top}(s)$ \texttt{Topological Pressure Function}. As a consequence we obtain 
\begin{lemma}\label{cp81}
  Let $\mathcal{F}$ be a CPLIFS on the line that satisfies the IOSC, and denote its attractor with $\Lambda$. Then 
  \[ \dim_{\rm H}\Lambda = s_{\rm top},
  \]
    where $s_{\rm top}$ is the unique root of the topological pressure defined in \eqref{cp82}.
\end{lemma}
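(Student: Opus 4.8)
The plan is to prove the lemma by chaining together three equalities, all of which have essentially been prepared by the discussion preceding the statement:
\[
\dim_{\rm H}\Lambda \;=\; \dim_{\rm H} R \;=\; \dim_{\rm H} R_W \;=\; s_{\rm top}.
\]
First I would record the set-theoretic identity $R=\Lambda$. By the definition of $T$ via \eqref{eq:locinverse}, the condition $T^{j}x\in\bigcup_{k}I_k$ for all $j\ge 0$ is equivalent to $x$ lying in a nested sequence of cylinder intervals $I_{i_1\dots i_n}$, which by \eqref{cr15} is precisely membership in $\Lambda$; here the IOSC is what guarantees that the inverse branches $(T|_{I_k})^{-1}=f_k$ have pairwise disjoint domains $I_k$, so the $T$-coding of a point of $R$ is unambiguous and agrees with its $\mathcal{F}$-coding.

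Next I would establish $\dim_{\rm H} R=\dim_{\rm H} R_W$. The set $W_0$ is finite, and since every point has at most $m$ preimages under $T$, each set $T^{-i}(W_0\setminus\{0,1\})$ is finite, so $W$ is countable. Thus the doubled-points space $[0,1]_W$ is obtained from $[0,1]$ by splitting countably many points into two, and the canonical projection $[0,1]_W\to[0,1]$ restricts to a surjection $\pi\colon R_W\to R$ which is a local isometry away from $W$ and at most two-to-one everywhere. Hence $\dim_{\rm H} R\le\dim_{\rm H} R_W$; conversely $R_W$ is the union of an isometric copy of $R\setminus W$ and a countable set, so countable stability of Hausdorff dimension gives $\dim_{\rm H} R_W\le\dim_{\rm H} R$.

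Finally I would invoke the theory of Raith and Hofbauer for the equality $\dim_{\rm H} R_W=s_{\rm top}$. Since $\mathcal{F}$ is a CPLIFS satisfying the IOSC, $T$ is a piecewise linear, uniformly expanding interval map: restricted to each first cylinder $I_k$ it has finitely many linear branches (one per interval of linearity of $f_k$), its slopes are the reciprocals of the $\rho_{k,i}$ and hence have absolute value strictly above $1$, and the finite set of points at which $T'$ is undefined is exactly $W_0$. Therefore $(R_W,T_W,\psi_{s,W})$ fits the framework of \cite{raith1994continuity} and \cite{hofbauer1996box}, and \cite[Lemma~3]{raith1994continuity} applies: the topological pressure function $P_{\rm top}(s)$ of \eqref{cp82} is continuous and strictly decreasing, and its unique zero $s_{\rm top}$ equals $\dim_{\rm H} R_W$. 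Combining the three equalities yields $\dim_{\rm H}\Lambda=s_{\rm top}$.

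The main obstacle is the third step: one has to check carefully that our map $T$ meets the precise hypotheses under which Raith identifies the root of the pressure with the Hausdorff dimension of $R_W$ — in particular that doubling the countably many points of $W$ produces a compact metric space on which $T_W$ extends continuously and on which the geometric potential $\psi_{s,W}(x)=-s\log|T'_W(x)|$ admits a continuous completion. This is exactly the construction recalled just before the lemma, so no new idea is required, but it is where the real content sits; the first two steps are elementary.
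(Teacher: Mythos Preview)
Your proposal is correct and follows essentially the same approach as the paper: the paper presents the lemma not with a separate proof but as an immediate consequence of the preceding discussion, which already records $R=\Lambda$, notes that $R$ and $R_W$ differ in only countably many points so their Hausdorff dimensions agree, and invokes \cite[Lemma~3]{raith1994continuity} to identify $\dim_{\rm H}R_W$ with the root $s_{\rm top}$. Your write-up simply fills in the same chain $\dim_{\rm H}\Lambda=\dim_{\rm H}R=\dim_{\rm H}R_W=s_{\rm top}$ with slightly more detail at each step.
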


\subsection{The natural pressure function}

For $s\in [0,\infty]$, we call the function 
\begin{equation}\label{cr64}
    \Phi^{\mathcal{F}}(s):=\limsup_{n\rightarrow\infty}\frac{1}{n}\log \sum_{i_1\dots i_n} |I^{\mathcal{F}}_{i_1\dots i_n}|^s
\end{equation}
the \texttt{natural pressure function} of $\mathcal{F}$. Note that this pressure can be defined for any IFS on the line.

It is easy to see that one obtain  $\Phi ^{\mathcal{F}}(s)$
above as a special case of the non-additive upper capacity topological pressure introduced by Barreira in \cite[p. 5]{barreira1996non}. 
According to \cite[Theorem 1.9]{barreira1996non}, the zero of $\Phi^{\mathcal{F}}(s)$ is well defined
\begin{equation}\label{cr61}
    s_{\mathcal{F}}:=(\Phi^{\mathcal{F}})^{-1}(0).
\end{equation}
For a given IFS $\mathcal{F}$ on the line, we name $s_{\mathcal{F}}$
the \texttt{natural dimension of the system}. Barreira also showed that $s_{\mathcal{F}}$ is always bigger or equal to the upper box dimension of the attractor $\Lambda$ of $\mathcal{F}$.
\begin{corollary}[Barreira]\label{cr12}
  For any IFS $\mathcal{F}$ on the line 
  \begin{equation}
\label{cr13}
\overline{\dim}_{\rm B}  \Lambda^{\mathcal{F}}\leq
s_{\mathcal{F}}.
\end{equation}
\end{corollary}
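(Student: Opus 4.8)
The plan is to obtain \eqref{cr13} directly from the cylinder cover \eqref{cr15}, which is exactly how Barreira's non-additive upper capacity pressure specialises in our situation: one takes the potential whose value on the cylinder $I^{\mathcal F}_{i_1\dots i_n}$ is $s\log|I^{\mathcal F}_{i_1\dots i_n}|$, and then $\Phi^{\mathcal F}(s)$ from \eqref{cr64} is the associated pressure, so \eqref{cr13} is a reformulation of the corresponding statement in \cite{barreira1996non}. For a self-contained argument I would first record two elementary facts about $\Phi^{\mathcal F}$. Writing $c:=\max_k\mathrm{Lip}(f_k)<1$, we have $|I^{\mathcal F}_{i_1\dots i_n}|\le c^{n}|I^{\mathcal F}|$, so the sums defining $\Phi^{\mathcal F}(s)$ are finite; moreover, comparing the level-$n$ sums for $s_1<s_2$ gives $\Phi^{\mathcal F}(s_2)\le\Phi^{\mathcal F}(s_1)+(s_2-s_1)\log c$, i.e. $\Phi^{\mathcal F}$ decreases with slope at most $\log c<0$. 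Since $\Phi^{\mathcal F}(s_{\mathcal F})=0$ by \eqref{cr61}, it follows that $\Phi^{\mathcal F}(s)\le(s-s_{\mathcal F})\log c<0$ for every $s>s_{\mathcal F}$, which is all I will need.

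Fix $s>s_{\mathcal F}$. Since $\limsup_{n}\frac1n\log\sum_{|w|=n}|I^{\mathcal F}_w|^{s}=\Phi^{\mathcal F}(s)<0$, there are $\varepsilon>0$ and $N_0$ with $\sum_{|w|=n}|I^{\mathcal F}_w|^{s}\le e^{-\varepsilon n}$ for all $n\ge N_0$, hence $M:=\sum_{w\in\Sigma^{\ast}}|I^{\mathcal F}_w|^{s}<\infty$. For $0<\delta<|I^{\mathcal F}|$ I would pass to the stopping-time antichain $\mathcal A_\delta:=\{\,i_1\dots i_n\in\Sigma^{\ast}:\ |I^{\mathcal F}_{i_1\dots i_n}|\le\delta<|I^{\mathcal F}_{i_1\dots i_{n-1}}|\,\}$, with the convention $|I^{\mathcal F}_{\emptyset}|:=|I^{\mathcal F}|$. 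Since cylinder lengths tend to $0$, every infinite word has a unique prefix in $\mathcal A_\delta$, so by \eqref{cr15} one gets $\Lambda^{\mathcal F}\subseteq\bigcup_{w\in\mathcal A_\delta}I^{\mathcal F}_w$, a cover by intervals of length at most $\delta$; consequently $N_\delta(\Lambda^{\mathcal F})\le|\mathcal A_\delta|$, where $N_\delta(\Lambda^{\mathcal F})$ denotes the least number of intervals of length $\delta$ needed to cover $\Lambda^{\mathcal F}$.

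It remains to bound $|\mathcal A_\delta|$ by a constant multiple of $\delta^{-s}$, and here I would invoke a one-sided distortion estimate: there is $\eta=\eta(\mathcal F)>0$ with $|f_k(J)|\ge\eta|J|$ for every interval $J$ and every $k\in[m]$. For a CPLIFS this holds with $\eta=\rho_{\min}/\max_k(l(k)+1)$, because on $J$ the map $f_k$ is linear with slope of absolute value at least $\rho_{\min}$ on at most $l(k)+1$ subintervals, so the oscillation of $f_k$ over $J$ is at least $\rho_{\min}$ times the length of the longest such subinterval, which is at least $|J|/(l(k)+1)$. Given this, for $w=i_1\dots i_n\in\mathcal A_\delta$ we obtain $|I^{\mathcal F}_w|\ge\eta\,|I^{\mathcal F}_{i_1\dots i_{n-1}}|>\eta\delta$, whence $|\mathcal A_\delta|(\eta\delta)^{s}\le\sum_{w\in\mathcal A_\delta}|I^{\mathcal F}_w|^{s}\le M$, i.e. $|\mathcal A_\delta|\le M\eta^{-s}\delta^{-s}$. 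Therefore $N_\delta(\Lambda^{\mathcal F})\le M\eta^{-s}\delta^{-s}$ for all small $\delta$, so $\overline{\dim}_{\rm B}\Lambda^{\mathcal F}=\limsup_{\delta\to0}\frac{\log N_\delta(\Lambda^{\mathcal F})}{-\log\delta}\le s$; letting $s\downarrow s_{\mathcal F}$ yields \eqref{cr13}.

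The only genuinely delicate point is the distortion bound $|f_k(J)|\ge\eta|J|$: it is exactly what converts the upper estimate $|I^{\mathcal F}_w|\le\delta$ on the antichain into a matching lower estimate, and for arbitrary strict contractions on the line it can fail (a contraction may collapse an interval), so in that full generality one must instead quote Barreira's theorem. Since every application of Corollary \ref{cr12} in this paper is to an (injective) CPLIFS, the elementary argument above is available and self-contained; the remaining steps are routine bookkeeping with cylinder sets, and, as in \cite{keane2003dimension}, no separation hypothesis enters anywhere.
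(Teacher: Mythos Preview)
The paper does not give its own proof of Corollary~\ref{cr12}; it simply attributes the bound to Barreira's non-additive formalism and uses it as a black box. Your attempt therefore goes further than the paper does, and the Moran stopping-time strategy is the right idea. There is, however, a genuine gap at the distortion step.

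The one-step bound $|f_k(J)|\ge\eta|J|$ is correct, but it controls the wrong ratio. Since $I^{\mathcal F}_{i_1\dots i_n}=f_{i_1}(I^{\mathcal F}_{i_2\dots i_n})$, what it yields is $|I^{\mathcal F}_{i_1\dots i_n}|\ge\eta\,|I^{\mathcal F}_{i_2\dots i_n}|$, a comparison with the cylinder obtained by deleting the \emph{first} letter. The antichain $\mathcal A_\delta$ instead gives information about $|I^{\mathcal F}_{i_1\dots i_{n-1}}|$, the cylinder with the \emph{last} letter removed; and since $I^{\mathcal F}_{i_1\dots i_n}=f_{i_1\dots i_{n-1}}(I_{i_n})$ while $I^{\mathcal F}_{i_1\dots i_{n-1}}=f_{i_1\dots i_{n-1}}(I)$, the ratio $|I^{\mathcal F}_{i_1\dots i_n}|/|I^{\mathcal F}_{i_1\dots i_{n-1}}|$ is governed by the distortion of the long composition $f_{i_1\dots i_{n-1}}$, for which there is no uniform bound in the CPLIFS setting. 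Concretely, take $I=[0,1]$, let $f_1$ fix $\phi=\tfrac12$ with slopes $a$ on $[0,\tfrac12]$ and $b$ on $[\tfrac12,1]$ (so $f_1(x)=\tfrac12+a(x-\tfrac12)$, resp.\ $\tfrac12+b(x-\tfrac12)$), and let $f_2(x)=\tfrac12 x$, so $I_2=[0,\tfrac12]$. Then $f_1^{\,n}$ has slope $a^{n}$ on $[0,\tfrac12]$ and $b^{n}$ on $[\tfrac12,1]$, hence
\[
\frac{|I_{1^{n}2}|}{|I_{1^{n}}|}
=\frac{\tfrac12 a^{n}}{\tfrac12(a^{n}+b^{n})}
=\frac{a^{n}}{a^{n}+b^{n}}\xrightarrow[n\to\infty]{}0
\qquad\text{whenever }0<a<b<1.
\]
So no uniform $\eta>0$ satisfies $|I^{\mathcal F}_w|\ge\eta\,|I^{\mathcal F}_{i_1\dots i_{n-1}}|$, the members of $\mathcal A_\delta$ need not have length $\ge c\delta$, and the counting bound $|\mathcal A_\delta|\le M\eta^{-s}\delta^{-s}$ is unjustified. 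Without a substitute for this step the argument does not reach $\overline{\dim}_{\rm B}\Lambda\le s$, and one really is forced back to Barreira's theorem, exactly as the paper does.
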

For a $\mathcal{F}=\lbrace f_i\rbrace_{i=1}^m$ CPLIFS let $(s_n)_{n\geq 1}$ be the unique series for which
\begin{equation} \label{cp84}
\sum_{i_1\dots i_n} \vert I_{i_1\dots i_n}\vert^{s_n} =1
\end{equation}
holds for every $n\geq 1$.
The following lemma shows that $s=\limsup_{n\rightarrow\infty } s_n$ equals to the root of the natural pressure $s_{\mathcal{F}}$. Recall that we write $\Lambda^{\mathcal{F}}$ for the attractor of the CPLIFS $\mathcal{F}$.

\begin{lemma}\label{cp85}
For a $\mathcal{F}$ CPLIFS defined on $\left[ 0,1\right]$, let $(s_n)_{n\geq 1}$ be the unique series that satisfies \ref{cp84}. Then the following holds
$$ s_{\mathcal{F}} = \limsup_{n\rightarrow\infty} s_n ,$$
where $s_{\mathcal{F}}$ is the root of the natural pressure function $\Phi^{\mathcal{F}}$.
\end{lemma}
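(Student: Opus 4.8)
The plan is to compare, at each level $n$, the finite sum $\Psi_n(s):=\sum_{i_1\dots i_n}|I_{i_1\dots i_n}|^{s}$ with the constant $1$, and then chase signs through the $\limsup$ defining $\Phi^{\mathcal F}$. First I would record the elementary features of $\Psi_n$. Since $\mathcal F$ is a CPLIFS on $I^{\mathcal F}=[0,1]$, we have $0<|I_{i_1\dots i_n}|\le \rho_{i_1\dots i_n}\le \rho_{\max}^{\,n}<1$ for every $n\ge 1$ by \eqref{ct67}, so each map $s\mapsto |I_{i_1\dots i_n}|^{s}$ is continuous and strictly decreasing; hence $\Psi_n$ is continuous and strictly decreasing, with $\Psi_n(0)=m^n$ and $\Psi_n(s)\to 0$ as $s\to\infty$. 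Consequently the root $s_n$ of \eqref{cp84} is the unique point where $\Psi_n(s_n)=1$, and strict monotonicity yields the key equivalences
\[
\tfrac1n\log\Psi_n(s)<0\iff s>s_n,\qquad \tfrac1n\log\Psi_n(s)>0\iff s<s_n .
\]
(The case $m=1$ is degenerate and both sides of the lemma are $0$, so I would assume $m\ge 2$.)

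Next I would show that the natural pressure $\Phi^{\mathcal F}(s)=\limsup_{n}\frac1n\log\Psi_n(s)$ is \emph{strictly} decreasing. For $s_1<s_2$, the bound $|I_{i_1\dots i_n}|\le \rho_{\max}^{\,n}$ gives $\Psi_n(s_2)\le \rho_{\max}^{\,n(s_2-s_1)}\Psi_n(s_1)$, hence $\frac1n\log\Psi_n(s_2)\le \frac1n\log\Psi_n(s_1)+(s_2-s_1)\log\rho_{\max}$; taking $\limsup_n$ (the extra term is constant) gives $\Phi^{\mathcal F}(s_2)\le \Phi^{\mathcal F}(s_1)+(s_2-s_1)\log\rho_{\max}<\Phi^{\mathcal F}(s_1)$, since $\rho_{\max}<1$. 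Combined with the fact that $s_{\mathcal F}$ is by definition the zero of $\Phi^{\mathcal F}$ (well defined by Barreira's \cite[Theorem 1.9]{barreira1996non}), strict monotonicity gives the full sign picture: $\Phi^{\mathcal F}(s)>0$ for $s<s_{\mathcal F}$, $\Phi^{\mathcal F}(s_{\mathcal F})=0$, and $\Phi^{\mathcal F}(s)<0$ for $s>s_{\mathcal F}$; equivalently $\Phi^{\mathcal F}(s)\le 0\iff s\ge s_{\mathcal F}$ and $\Phi^{\mathcal F}(s)\ge 0\iff s\le s_{\mathcal F}$.

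Then I would prove the two inequalities. For $\limsup_n s_n\ge s_{\mathcal F}$: fix any $s>\limsup_n s_n$; then $s>s_n$ for all large $n$, so $\frac1n\log\Psi_n(s)<0$ for all large $n$, whence $\Phi^{\mathcal F}(s)\le 0$ and therefore $s\ge s_{\mathcal F}$; since this holds for every $s>\limsup_n s_n$, we get $\limsup_n s_n\ge s_{\mathcal F}$. For $\limsup_n s_n\le s_{\mathcal F}$: fix any $s<\limsup_n s_n$; then $s<s_n$ for infinitely many $n$, so $\frac1n\log\Psi_n(s)>0$ for infinitely many $n$, whence $\Phi^{\mathcal F}(s)\ge 0$ (the $\limsup$ dominates this positive subsequence) and therefore $s\le s_{\mathcal F}$; since this holds for every $s<\limsup_n s_n$, we get $\limsup_n s_n\le s_{\mathcal F}$. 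Combining the two bounds gives $s_{\mathcal F}=\limsup_n s_n$.

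The argument is essentially bookkeeping, so I do not expect a serious obstacle. The two points that need genuine care are: (i) upgrading the monotonicity of $\Phi^{\mathcal F}$ from merely non-increasing to \emph{strictly} decreasing — this is precisely where $\rho_{\max}<1$ is used, and it is what lets the sign of $\Phi^{\mathcal F}$ detect on which side of $s_{\mathcal F}$ a point $s$ lies; and (ii) keeping the quantifiers straight when passing the strict sign of $\frac1n\log\Psi_n(s)$ through the $\limsup$ — "eventually negative" in the first half versus "positive infinitely often" in the second half — since these are exactly what convert the pointwise comparisons with $s_n$ into comparisons of $\limsup_n s_n$ with $s_{\mathcal F}$.
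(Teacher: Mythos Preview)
Your argument is correct, and it takes a genuinely different route from the paper. The paper proves the lemma by a two-sided sandwich: writing $|I_{i_1\dots i_n}|^{s}=|I_{i_1\dots i_n}|^{s_n}\cdot |I_{i_1\dots i_n}|^{s-s_n}$ and bounding the second factor above and below via $\rho_{\min}^{n}\le |I_{i_1\dots i_n}|\le \rho_{\max}^{n}$, it obtains
\[
(s-s_n)\log\rho_{\min}\ \le\ \tfrac1n\log\Psi_n(s)\ \le\ (s-s_n)\log\rho_{\max},
\]
and then substitutes $s=s_{\mathcal F}$ and lets $n\to\infty$. Your approach instead uses only the upper bound $|I_{i_1\dots i_n}|\le \rho_{\max}^{n}$ to show $\Phi^{\mathcal F}$ is \emph{strictly} decreasing, and then settles the equality by the soft sign-tracking ``eventually $<0$'' versus ``infinitely often $>0$'' argument on $\tfrac1n\log\Psi_n(s)$. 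What your route buys is that it never invokes the lower bound $|I_{i_1\dots i_n}|\ge \rho_{\min}^{n}$, so it sidesteps any worry about whether such a bound holds (for non-injective maps it can fail) and about the direction of the inequalities when $s-s_n$ changes sign. What the paper's route buys is a quantitative estimate relating $\tfrac1n\log\Psi_n(s)$ linearly to $s-s_n$, though that extra information is not used elsewhere.
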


\begin{proof}
Recall that we denote the smallest and largest contraction ratio of $\mathcal{F}$ by $\rho_{\min}$ and $\rho_{\max}$ respectively, and fix an arbitrary $n\geq 1$.

For a given $n$ length word $i_1\dots i_n$ and an arbitrary $s$ we can write
$\vert I_{i_1\dots i_n}\vert^s = \vert I_{i_1\dots i_n}\vert^{s_n}
\cdot \vert I_{i_1\dots i_n}\vert^{s-s_n}$ to obtain the estimates
\begin{equation}\label{cp79}
\rho_{\min}^{n(s-s_n)}\cdot \vert I_{i_1\dots i_n}\vert^{s_n}
\leq \vert I_{i_1\dots i_n}\vert^s \leq
\rho_{\max}^{n(s-s_n)}\cdot \vert I_{i_1\dots i_n}\vert^{s_n}
\end{equation}

Since by definition $\sum_{i_1\dots i_n}\vert I_{i_1\dots i_n}\vert^{s_n} =1$, equation \eqref{cp79} implies

\begin{equation}
(s-s_n)\log \rho_{\min} \leq
\frac{1}{n}\log \sum_{i_1\dots i_n} \vert I_{i_1\dots i_n}\vert^s \leq
(s-s_n)\log \rho_{\max}
\label{eq:snat_est2}
\end{equation}

Reordering the inequalities we obtain
\begin{equation}
\frac{\frac{1}{n}\log \sum_{i_1\dots i_n} \vert I_{i_1\dots i_n}\vert^s}{\log \rho_{\max}} \leq
(s-s_n) \leq
\frac{\frac{1}{n}\log \sum_{i_1\dots i_n} \vert I_{i_1\dots i_n}\vert^s}{\log \rho_{\min}}
\end{equation}

If we choose $s$ to be equal to $s_{\mathcal{F}}$, then taking the limit superior of each side yields $\limsup_{n\rightarrow\infty} s_{\mathcal{F}}-s_n =0$.

\end{proof}

Using this lemma we show that for an injective CPLIFS $\mathcal{F}$ the root of the natural pressure \eqref{cr64} coincides with the root of the topological pressure function \eqref{cp82} of the associated expanding map.

\begin{lemma}\label{cp86}
Let $\mathcal{F}$ be an injective CPLIFS that satisfies the IOSC. Write $s_{top}$ for the root of the topological pressure function \eqref{cp82}, and $s_{\mathcal{F}}$ for the root of the natural pressure function \eqref{cr64}. Then
\[
s_{top} = s_{\mathcal{F}}
\].
\end{lemma}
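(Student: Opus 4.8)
I would prove $s_{top}=s_{\mathcal F}$ by two inequalities. The inequality $s_{\mathcal F}\ge s_{top}$ is already at hand: by Lemma~\ref{cp81}, $s_{top}=\dim_{\rm H}\Lambda\le\overline{\dim}_{\rm B}\Lambda$, and by Corollary~\ref{cr12}, $\overline{\dim}_{\rm B}\Lambda\le s_{\mathcal F}$. So the substance is the reverse inequality $s_{\mathcal F}\le s_{top}$.

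For this I would compare the two pressure functions. Both $\Phi^{\mathcal F}$ and $P_{top}$ are continuous and strictly decreasing (the latter by \cite[Lemma~3]{raith1994continuity}) with unique roots $s_{\mathcal F}$ and $s_{top}$, so it is enough to check $\Phi^{\mathcal F}(s)\le P_{top}(s)$ for all $s\ge 0$: evaluating at $s=s_{\mathcal F}$ then gives $0=\Phi^{\mathcal F}(s_{\mathcal F})\le P_{top}(s_{\mathcal F})$, hence $s_{\mathcal F}\le s_{top}$. Two structural facts feed into the comparison. First, since the convex hull of $\Lambda$ is a compact interval sent into itself by every $f_k$ and contained in $I^{\mathcal F}$, it coincides with $I^{\mathcal F}=[0,1]$; thus $0,1\in\Lambda$ and, each $f_w$ being monotone, $|I_w|=f_w(1)-f_w(0)=\mathrm{diam}(\Lambda\cap I_w)$ for every word $w\in[m]^n$. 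Second, by the IOSC the first cylinders $I_1,\dots,I_m$ are separated by a fixed $\delta_0>0$, and following the $T$-orbit coordinate by coordinate one sees that two points of $R_W$ lying over distinct level-$n$ cylinders are $(n,\varepsilon)$-separated whenever $\varepsilon<\delta_0$. Using these, the task reduces to producing, for each $n$, an $(n,\varepsilon)$-separated set $E_n\subset R_W$ with
\[
  \sum_{x\in E_n}\exp\Bigl(\sum_{j=0}^{n-1}\psi_{s,W}(T_W^jx)\Bigr)
  \;=\;\sum_{x\in E_n}\bigl|(T_W^n)'(x)\bigr|^{-s}
  \;\ge\; c_n\!\!\sum_{w\in[m]^n}\!\!|I_w|^s ,
\]
where $\tfrac1n\log c_n\to 0$; plugging $E_n$ into \eqref{cp80} yields $P_{top}(s)\ge\Phi^{\mathcal F}(s)$. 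The subexponential factor $c_n$ is forced on us by the number $\le n\max_k l(k)+1$ of linearity intervals of $f_w$, which is polynomial in $n$ and hence invisible after $\tfrac1n\log(\cdot)$. The degenerate case $m=1$ (where $\Lambda$ is a point and $s_{top}=s_{\mathcal F}=0$) is trivial.

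The hard part will be the construction of these separated sets — equivalently, turning $|I_w|=\mathrm{diam}(\Lambda\cap I_w)$ into the contraction $|(T_W^n)'(x)|^{-1}$ of an honest orbit segment $x\in R_W$ lying over $I_w$. The difficulty is that a large share of $|I_w|$ can be carried by linearity intervals of $f_w$ that lie inside gaps of $\Lambda$; these intervals meet $\Lambda$ only at their endpoints and are essentially invisible to $T_W$, so a naive choice of one $\Lambda$-point per word records only the slopes of $f_w$ on the intervals that actually meet $\Lambda$, and those can be exponentially smaller than $|I_w|$. Overcoming this is exactly what the doubled-points topology, the self-similar (cylinder) description of the gaps of $\Lambda$, and a pigeonhole over the polynomially many linearity intervals are for; this is the step where the theory of Raith~\cite{raith1994continuity} and Hofbauer~\cite{hofbauer1996box} — which already identifies the separated-set pressure with the partition-sum pressure on the doubled space — does the real work, and once the subexponential error is absorbed the two roots must agree.
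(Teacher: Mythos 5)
Your overall strategy coincides with the paper's: the inequality $s_{top}\le s_{\mathcal F}$ follows from Lemma \ref{cp81} combined with Corollary \ref{cr12} (the paper re-derives essentially the same bound by hand from Lemma \ref{cp85}, so your route is if anything cleaner), and for the reverse inequality both you and the paper build $(n,\varepsilon)$-separated sets by selecting one point from each level-$n$ cylinder, using the positive gap $\delta_0$ guaranteed by the IOSC to obtain separation, and then compare the resulting partition sum with $\sum_{w}\vert I_w\vert^s$ before invoking the strict monotonicity of $P_{top}$.

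As a proof, however, your proposal stops exactly at the decisive estimate. The reduction you state --- produce $E_n\subset R_W$ with $\sum_{x\in E_n}\vert (T_W^n)'(x)\vert^{-s}\ge c_n\sum_w\vert I_w\vert^s$ and $\tfrac1n\log c_n\to0$ --- is the right one, and over the whole interval the bound is immediate: since $\vert I_w\vert=\int_0^1\vert f_w'(t)\vert\,dt$, one has $\max_{t\in[0,1]}\vert f_w'(t)\vert\ge\vert I_w\vert$ with no polynomial loss at all. But, as you yourself observe, the chosen point must lie in $R_W$, i.e.\ essentially in $\Lambda\cap I_w$, so what is actually needed is $\max_{y\in\Lambda}\vert f_w'(y)\vert\ge c_n^{1/s'}\vert I_w\vert$, and the linearity interval of $f_w$ carrying the maximal slope may sit entirely inside a gap of $\Lambda$. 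You name this difficulty, you correctly note that an $(n,\varepsilon)$-separated set cannot hold more than one point per cylinder (so one cannot simply enumerate linearity intervals instead of cylinders), and you then defer its resolution to ``the theory of Raith and Hofbauer'' without extracting from it any concrete statement that closes the estimate. That is a genuine gap: the entire content of $s_{\mathcal F}\le s_{top}$ is that the full cylinder lengths $\vert I_w\vert$, gaps of $\Lambda$ included, are nevertheless witnessed by contraction rates along honest orbits in $R_W$. For what it is worth, this is also precisely the point at which the paper's own proof is laconic (it selects the point of each cylinder ``maximizing the derivative'' and asserts the comparison with $\vert I_{\mathbf i}\vert^s$), so you have located the crux correctly; but locating the crux is not the same as proving it, and your proposal as written does not.
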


\begin{proof}
Recall that we denote the smallest and largest contraction ratio of $\mathcal{F}$ by $\rho_{\min}$ and $\rho_{\max}$ respectively.
Fix an $\varepsilon >0$.

By Lemma \ref{cp85} $s_{\mathcal{F}}=\limsup_{n\rightarrow\infty} s_n$,
hence $\exists N \text{ such that } \forall n>N :\: s_n < s_{\mathcal{F}}+\frac{\varepsilon}{2}$. Thus by \eqref{cp84} we have

\begin{equation}
\sum_{i_1\dots i_n} \vert I_{i_1\dots i_n}\vert^{s_{\mathcal{F}}+\varepsilon}
<  \sum_{i_1\dots i_n} \vert I_{i_1\dots i_n}\vert^{s_n+\frac{\varepsilon}{2}} \leq \rho_{\max}^{\frac{n\varepsilon}{2}}
\rightarrow 0, \text{ as  } n\rightarrow\infty .
\end{equation}

It implies that $\mathcal{H}^{s_\mathcal{F}+\varepsilon}(\Lambda_{\mathcal{F}}) =0$ for each $\varepsilon >0$, where $\mathcal{H}^s$ stands for the $s$-dimensional Hausdorff measure. By the definition of the Hausdorff dimension, we obtain
\begin{equation}
\dim_H \Lambda^{\mathcal{F}} \leq s_{\mathcal{F}}
\label{eq:oneside_upper}
\end{equation}

According to Lemma \ref{cp81} $\dim_H \Lambda^{\mathcal{F}} = s_{top}$, so we already proved
$$s_{top} \leq s_{\mathcal{F}}.$$

To prove the other direction, we first need to reformulize the pressure function $P_{top}(s)$ defined in \eqref{cp82}, to see how it relates to the natural pressure $\Phi^{\mathcal{F}}(s)$ \eqref{cr64}.

Recall that we assumed that the IOSC holds, which means that all of the level $1$ cylinders are separated by some positive distance $D>0$.
Fix an $D >\varepsilon >0$, and let $N(\varepsilon )$ be sufficiently big such that
$$ \max_{i_1,\dots ,i_{N(\varepsilon )}} \vert I_{i_1,\dots ,i_{N(\varepsilon )}}\vert <\varepsilon .$$
We will show that by choosing one element of each level $N(\varepsilon )$ cylinder we obtain an $(N(\varepsilon ),\varepsilon)$-separated set.

For any two $N$ length words $\mathbf{i}=\lbrace i_1,\dots i_N\rbrace ,\; \mathbf{j}=\lbrace j_1,\dots j_N\rbrace$ let
$ \vert \mathbf{i} \wedge \mathbf{j}\vert = \min \lbrace k-1:\:
i_k \neq j_k\rbrace$ .
Thus if we iterate $\vert \mathbf{i} \wedge \mathbf{j}\vert$ times $T$ over the cylinders
$I_{i_1,\dots ,i_N} ,\: I_{j_1,\dots ,j_N}$, the images will fall into different first level cylinders. More formally
$$
d(T^{\vert \mathbf{i} \wedge \mathbf{j}\vert} I_{\mathbf{i}} ,\:
T^{\vert \mathbf{i} \wedge \mathbf{j}\vert} I_{\mathbf{j}}) > D >\varepsilon .
$$

Therefore by choosing one element from each $N(\varepsilon )$ level cylinder, we obtain an $(N(\varepsilon ),\varepsilon)$-separated subset that we denote by $I^{N(\varepsilon )}_{sep}$.
We require $\forall x\in I^{N(\varepsilon )}_{sep}$ to maximize the derivative of $T$ over the $N(\varepsilon )$ cylinder that contains $x$. We can make this constraint, since any choice of elements will do.
Remember that we use the doubled points topology introduced in \cite[p.~41]{raith1994continuity}, so $T^{\prime}$ is well defined at every $x\in \left[ 0,1\right]_W$ .

We can define $I^{n}_{sep}$ similarly for any $n>N(\varepsilon)$.
We substitute these sets into the topological pressure to gain a lower bound. We used the notation $\psi_s(x) := -s\log\vert T^{\prime}(x)\vert$ to make the formulas more concise.

\begin{align*}
P_{top}(s) &= \lim_{\varepsilon\rightarrow 0}\limsup_{n\rightarrow\infty}
\frac{1}{n}\log
\sup_E \sum_{x\in E} \exp\left( \sum_{j=0}^{n-1} \psi_s(T^j x)\right) \\
&\geq \lim_{\varepsilon\rightarrow 0}\limsup_{n \rightarrow\infty}
\frac{1}{n}\log \sum_{x\in I^{n}_{sep}}
\exp\left( \sum_{j=0}^{n-1} \psi_s(T^j x)\right) \\
&\geq \lim_{\varepsilon\rightarrow 0}\limsup_{n \rightarrow\infty}
\frac{1}{n}\log \sum_{\mathbf{i}:\:
\Pi (\mathbf{i})\in I^{n}_{sep}}
\exp\left( \sum_{j=0}^{n-1} \vert I_{\mathbf{i}}\vert^s \right)
= \Phi^{\mathcal{F}} (s),
\label{eq:oneside_lower}
\end{align*}
where in the last inequality we substituted $\psi_s(x)=-s\log \vert T^{\prime} (x)\vert $.
For $s=s_{\mathcal{F}}$, the right hand side is equal to $0$. The pressure function $P_{top}(s)$ is strictly decreasing, thus its unique zero $s_{top}$ must be bigger or equal to $s_{\mathcal{F}}$.
We just obtained $$ s_{\mathcal{F}} \leq s_{top} .$$

\end{proof}

As a consequence of Lemma \ref{cp86}, Lemma \ref{cp81} and Corollary \ref{cr12},
we obtain
\begin{theorem}\label{cs13}
If $\Lambda$ is the attractor of a CPLIFS $\mathcal{F}$ that satisfies the IOSC, then
\begin{equation}\label{sf76}
  \dim_H \Lambda = \dim_B \Lambda =s_{\mathcal{F}},
\end{equation}
where $s_{\mathcal{F}}$ is the unique root of the natural pressure function $\Phi^{\mathcal{F}}$ defined in \eqref{cr61}.
\end{theorem}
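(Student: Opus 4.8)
The plan is to obtain Theorem \ref{cs13} by simply assembling the three results established above, with no further computation. First I would apply Lemma \ref{cp81}: since $\mathcal{F}$ satisfies the IOSC, its attractor satisfies $\dim_{\rm H}\Lambda = s_{\rm top}$, where $s_{\rm top}$ is the unique root of the topological pressure function $P_{top}$ attached to the associated expanding map $T_W$ in the doubled points topology. Next I would invoke Lemma \ref{cp86}, which identifies $s_{\rm top}$ with $s_{\mathcal{F}}$, the root of the natural pressure function $\Phi^{\mathcal{F}}$. Together these give $\dim_{\rm H}\Lambda = s_{\mathcal{F}}$ outright.

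It then remains only to upgrade the statement about Hausdorff dimension to one about box dimension. For this I would use the general inequalities $\dim_{\rm H}\Lambda \le \underline{\dim}_{\rm B}\Lambda \le \overline{\dim}_{\rm B}\Lambda$, valid for any bounded set, together with Barreira's bound from Corollary \ref{cr12}, namely $\overline{\dim}_{\rm B}\Lambda \le s_{\mathcal{F}}$. Chaining these yields
\[
s_{\mathcal{F}} = \dim_{\rm H}\Lambda \le \underline{\dim}_{\rm B}\Lambda \le \overline{\dim}_{\rm B}\Lambda \le s_{\mathcal{F}},
\]
so all four quantities coincide; in particular the box dimension exists and equals both $\dim_{\rm H}\Lambda$ and $s_{\mathcal{F}}$, which is exactly \eqref{sf76}.

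There is essentially no obstacle at this stage: all the substantive work lives in Lemmas \ref{cp81} and \ref{cp86} (the passage to the expanding map of Raith and Hofbauer, and the comparison of the topological pressure with the natural pressure) and in Barreira's non-additive pressure theory behind Corollary \ref{cr12}. The only point worth a sentence of care is that Lemma \ref{cp86} was stated for \emph{injective} CPLIFS, so I would either add injectivity to the hypotheses of Theorem \ref{cs13} or remark that, once the IOSC holds, the construction of $T$ and the two-sided pressure comparison go through unchanged. Modulo that bookkeeping, the proof is a single line of inequalities as displayed above.
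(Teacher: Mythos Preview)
Your proposal is correct and matches the paper's own argument exactly: the theorem is stated as a direct consequence of Lemma~\ref{cp81}, Lemma~\ref{cp86}, and Corollary~\ref{cr12}, assembled via the same chain of inequalities you wrote down. Your observation about the injectivity hypothesis is also apt, since the paper's Lemma~\ref{cp86} is stated for injective CPLIFS while Theorem~\ref{cs13} omits that word.
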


Note that the equality of the box and Hausdorff dimensions of the attractor also follows from Lemma \eqref{cp81} and the Main theorem of \cite{hofbauer1996box}.

\section{Lebesgue measure of the attractor for small parameters}

Let $\mathcal{F}=\{f_k\}_{k=1}^m$ be a CPLIFS. Recall that $\rho_k$ is the largest expansion ratio of $f_k$ in absolute value, and $\rho_{\max}=\max_{k\in[m]} \rho_k$. 
Throughout this section we will assume that all contraction ratios of the functions of $\mathcal{F}$ are positive.  

\begin{definition}\label{sf67}
  We call $\mathcal{F}$ \texttt{small} if both of the following two requirements hold:
  \begin{enumerate}[{\bf (a)}]
    \item $\sum\limits_{k=1}^{m}\rho_k<1$.
    \item  Our second requirement depends on the injectivity of $f_k$:
      \begin{enumerate}[{\bf (i)}]
        \item If $f_k$ is injective then we require that $\rho_k<\frac{1}{2}$.
        \item If $f_k$ is not injective then we require that $\rho_k<\frac{1-\rho_{\max}}{2}$, which always holds if $\rho_{\max}<\frac{1}{3}$.
      \end{enumerate}
    \end{enumerate}
\end{definition}

According to Proposition 2.3 of \cite{prokaj2021piecewise}, we may represent a $\dim_{\rm P}$-typical small CPLIFS with a self-similar GDIFS. Using some lemmas from \cite{prokaj2021piecewise} and Theorem \ref{sf72}, we are going to show that $s_{\mathcal{F}}>1$ typically implies that the Lebesgue measure of the attractor $\Lambda_{\mathcal{F}}$ is positive.

\begin{theorem}\label{sf63}
  Let $(\mathfrak{b},\pmb{\tau})$ be the vector of translation parameters of a system $\mathcal{F}=\{f_k\}_{k=1}^m\in \mathrm{CPLIFS}_{\pmb{\ell} }$ with attractor $\Lambda^{(\mathfrak{b},\pmb{\tau})}$. If all the functions in $\mathcal{F}$ has positive slopes, then for $\mathcal{L}_{L+m}$-almost every $(\mathfrak{b},\pmb{\tau})\in \mathfrak{B}^{\pmb{\ell} }\times \mathbb{R}^m$ we have 

  \begin{equation}\label{sf66}
    s_{\mathcal{F}}>1 \implies \mathcal{L}_1(\Lambda^{(\mathfrak{b},\pmb{\tau})}) >0,
  \end{equation}
  where $L=l(1)+\dots+l(m)$ is the total number of breaking points in $\mathcal{F}$.
\end{theorem}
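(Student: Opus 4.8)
The plan is to run the transversality–second–moment method directly on the continuous piecewise linear system, exactly as in the proof of Theorem~\ref{sf72} for self-similar GDIFS, but reading off the geometry of the cylinder intervals $I_{i_1\dots i_n}$ instead of that of a graph. Fix the contraction vector $\pmb{\rho}$. Each $f_k$ agrees on its linearity interval $J_{k,i}$ with the similarity $S_{k,i}(x)=\rho_{k,i}x+c_{k,i}$ of the generated IFS $\mathcal{S}_{\mathcal F}$, and the intercept $c_{k,i}$ depends affinely only on $\tau_k$ and on the breaking points of $f_k$. A one-line linear-algebra computation — using that consecutive slopes differ, so that $\mathfrak{R}^{\pmb\ell}$ forces the relevant coefficients to be nonzero — shows that $(\mathfrak b,\pmb\tau)\mapsto\mathbf c:=(c_{k,i})_{k\in[m],\,i\in[l(k)+1]}$ is a block-diagonal linear bijection of $\mathfrak B^{\pmb\ell}\times\mathbb R^m$ onto an open subset of $\mathbb R^{L+m}$. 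Hence it suffices to prove the implication for $\mathcal L_{L+m}$-a.e.\ $\mathbf c$ in every bounded open cube $\mathcal Q\subset\mathbb R^{L+m}$, treating the intercepts $\mathbf c$ as free parameters; write $\Pi^{\mathbf c}\colon\Sigma\to\Lambda^{\mathbf c}$ and $I_{\mathbf w}=I^{\mathbf c}_{\mathbf w}$ for $\mathbf w\in\Sigma^\ast$.

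\textbf{The measure.} Since $s_{\mathcal F}$ varies with $\mathbf c$, decompose $\{\mathbf c\in\mathcal Q:s_{\mathcal F}>1\}$ into countably many sets on each of which $s_{\mathcal F}$ lies in a short interval; on such a set fix $s$ with $\tfrac{1+s_{\mathcal F}}2<s<s_{\mathcal F}$, so in particular $s>1$ and $2s-1>s_{\mathcal F}$. Because $\Phi^{\mathcal F}(s)>0$, the non-additive thermodynamic formalism of \cite{barreira1996non} (equivalently, the construction used in \cite{prokaj2021piecewise}) yields a Borel probability measure $\nu$ on $\Sigma$ with $\nu([\mathbf w])\le C_0|I_{\mathbf w}|^{s}$ for all $\mathbf w$, with $C_0$ uniform over $\mathcal Q$. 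Put $\mu^{\mathbf c}:=\Pi^{\mathbf c}_\ast\nu$, a probability measure carried by $\Lambda^{\mathbf c}$; it is enough to show $\mu^{\mathbf c}\ll\mathcal L_1$ for a.e.\ $\mathbf c\in\mathcal Q$, since then $\mathcal L_1(\Lambda^{\mathbf c})\ge\mathcal L_1(\supp\mu^{\mathbf c})>0$.

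\textbf{Transversality and the second moment.} The technical core is the estimate: there is $C_1=C_1(\mathcal Q)$ such that for all $\mathbf i\ne\mathbf j\in\Sigma$, with $\mathbf i\wedge\mathbf j$ their longest common prefix,
$$\mathcal L_{L+m}\bigl(\{\mathbf c\in\mathcal Q:\ |\Pi^{\mathbf c}(\mathbf i)-\Pi^{\mathbf c}(\mathbf j)|<r\}\bigr)\ \le\ C_1\,\min\{1,\ r/|I_{\mathbf i\wedge\mathbf j}|\}.$$
One proves it by factoring out $f^{\mathbf c}_{\mathbf i\wedge\mathbf j}$ (whose slope is of order $|I_{\mathbf i\wedge\mathbf j}|$): the two remaining tails begin with different symbols, so some intercept coordinate $c_{k,i}$ moves $\Pi^{\mathbf c}(\mathbf i)-\Pi^{\mathbf c}(\mathbf j)$ with derivative bounded below by a constant multiple of $|I_{\mathbf i\wedge\mathbf j}|$, uniformly on $\mathcal Q$, and slicing in that coordinate gives the bound. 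Granting it, Fubini and $\nu([\mathbf w])\le C_0|I_{\mathbf w}|^s$ give
\begin{multline*}
\int_{\mathcal Q}(\mu^{\mathbf c}\times\mu^{\mathbf c})\bigl(\{|x-y|<r\}\bigr)\,d\mathbf c
\ \le\ C_1\iint\min\{1,\ r/|I_{\mathbf i\wedge\mathbf j}|\}\,d\nu(\mathbf i)\,d\nu(\mathbf j)\\
\ \le\ C_0^2C_1\,r\sum_{n\ge 0}\ \sum_{\mathbf w\in\Sigma^n}|I_{\mathbf w}|^{2s-1}\ \le\ C_2\,r ,
\end{multline*}
the last sum converging geometrically since $2s-1>s_{\mathcal F}=\limsup_n s_n$ (with $\sum_{\mathbf w\in\Sigma^n}|I_{\mathbf w}|^{s_n}=1$) and $|I_{\mathbf w}|\le\rho_{\max}^{|\mathbf w|}|I^{\mathcal F}|$. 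By Fatou, $\int_{\mathcal Q}\liminf_{r\to0}\tfrac1r(\mu^{\mathbf c}\times\mu^{\mathbf c})(\{|x-y|<r\})\,d\mathbf c<\infty$, so this lower derivative is finite for a.e.\ $\mathbf c\in\mathcal Q$, which by the standard differentiation-of-measures argument forces $\mu^{\mathbf c}\ll\mathcal L_1$ (with $L^2$ density). Summing over the countable decomposition and letting $\mathcal Q\uparrow\mathbb R^{L+m}$ completes the proof.

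\textbf{Main obstacle.} The hard step is the transversality estimate. Unlike for a self-similar GDIFS, $\Pi^{\mathbf c}$ is only piecewise linear in both $x$ and $\mathbf c$, and the combinatorial pattern of linearity intervals visited along $\mathbf i$ itself depends on $\mathbf c$, so one cannot merely differentiate a closed formula; one must show this pattern is locally constant off a Lebesgue-null set and that, on each piece, a suitable intercept coordinate still displaces $\Pi^{\mathbf c}(\mathbf i)-\Pi^{\mathbf c}(\mathbf j)$ at the rate $|I_{\mathbf i\wedge\mathbf j}|$, positivity of all slopes being what prevents cancellation of the relevant derivatives — this is exactly the type of estimate already developed in \cite{prokaj2021piecewise}. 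A secondary point is the measure $\nu$: because $|I_{\mathbf w}|$ is \emph{not} multiplicative under concatenation for a CPLIFS, the Frostman bound must be obtained through the sub-multiplicative pressure rather than from a Bernoulli measure. (One could instead try to invoke Theorem~\ref{sf72} after replacing $\Lambda^{\mathbf c}$ by the attractor of an associated self-similar GDIFS; but that representation requires regularity, which — precisely when $s_{\mathcal F}>1$ and $\Lambda$ tends to be fat — is no longer a full-measure condition, so the direct argument is the natural one, and it amounts to checking the very same transversality inside the subspace cut out by the CPLIFS parameters.)
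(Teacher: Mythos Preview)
Your approach is genuinely different from the paper's. The paper does precisely what you dismiss in your final paragraph: it restricts to the full-measure set $\mathcal T^{\pmb\rho}$ of parameters for which $\mathcal F$ is \emph{regular} (citing \cite[Proposition~2.3]{prokaj2021piecewise}), partitions $\mathcal T^{\pmb\rho}$ into equivalence classes of systems sharing the same associated GDIFS graph and edge-contractions, notes (\cite[Fact~4.1]{prokaj2021piecewise}) that on each class the map $(\mathfrak b,\pmb\tau)\mapsto\mathbf t$ to the GDIFS translations is non-singular affine, and applies Theorem~\ref{sf72} together with $\alpha(\mathcal H)=s_{\mathcal F}$ (\cite[Lemma~5.1]{prokaj2021piecewise}). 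The proof is a few lines of bookkeeping on top of these black boxes. Your direct transversality route, if it could be carried out, would bypass regularity altogether and would not need the smallness hypothesis that the paper's proof invokes.

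That said, your sketch has a real gap at exactly the point you flag. Both the transversality bound and the second-moment computation carry $|I_{\mathbf i\wedge\mathbf j}|$ and $|I_{\mathbf w}|$ on the right-hand side, but for a CPLIFS these lengths \emph{depend on $\mathbf c$}: you cannot integrate over $\mathbf c$ on the left and then bound by a $\mathbf c$-dependent quantity on the right. In the self-similar GDIFS proof of Theorem~\ref{sf72} the corresponding lengths are pure products of contraction ratios, independent of the translations, which is what makes the Fubini step go through. To repair this you would need to replace $|I_{\mathbf w}^{\mathbf c}|$ everywhere by a parameter-free surrogate (e.g.\ $\rho_{\mathbf w}$, or $\inf_{\mathbf c\in\mathcal Q}|I_{\mathbf w}^{\mathbf c}|$) \emph{simultaneously} in the Frostman bound, the transversality estimate, and the convergence of $\sum_{\mathbf w}|I_{\mathbf w}|^{2s-1}$; since $s_{\mathcal F}$ is itself defined through the $\mathbf c$-dependent lengths, it is not clear that any single surrogate makes all three work. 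The Frostman measure is a second soft spot: Barreira's non-additive formalism yields a variational principle, not a Gibbs inequality $\nu([\mathbf w])\le C_0|I_{\mathbf w}|^s$, and the usual route to such bounds needs bounded distortion or quasi-multiplicativity, neither of which you have uniformly in $\mathbf c$.
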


\begin{proof}
  Fix a small vector of contractions $\pmb{\rho}$. Let $\mathcal{T}^{\pmb{\rho}}\subset \mathfrak{B}^{\pmb{\ell} }\times \mathbb{R}^m$ be the set of those translation parameters $(\mathfrak{b},\pmb{\tau})$ for which the associated CPLIFS $\mathcal{F}^{(\mathfrak{b},\pmb{\tau})}$ is regular. By \cite[Proposition~2.3]{prokaj2021piecewise}, $\mathcal{T}^{\pmb{\rho}}$ has total measure with respect to $\mathcal{L}_{L+m}$. 

  We say that two countinuous piecewise linear iterated function systems $\mathcal{F}$ and $\mathcal{F}^{'}$ are equivalent if they are defined by the same directed graph, and they have the same contractions on every edge. Equivalent CPLIFSs are not necessarily identical, as they might have different graph directed sets. 
  For an arbitrary $(\mathfrak{b},\pmb{\tau})\in\mathcal{T}^{\pmb{\rho}}$, we define $\mathcal{T}^{\pmb{\rho}}_{(\mathfrak{b},\pmb{\tau})}\subset \mathcal{T}^{\pmb{\rho}}$ as the equivalence class of $(\mathfrak{b},\pmb{\tau})$. These neighbourhoods form an open cover of $\mathcal{T}^{\pmb{\rho}}$.

  Now we are left to prove that for any $(\mathfrak{b},\pmb{\tau})\in\mathcal{T}^{\pmb{\rho}}$, \eqref{sf66} holds for $\mathcal{L}_{L+m}$-almost every $(\mathfrak{b}^{'},\pmb{\tau}^{'})\in \mathcal{T}^{\pmb{\rho}}_{(\mathfrak{b},\pmb{\tau})}$. 
  
  Let us pick an arbitrary $(\mathfrak{b},\pmb{\tau})$. Then, $\mathcal{F}:=\mathcal{F}^{(\mathfrak{b},\pmb{\tau})}$ has an associated graph-directed system which we denote by $\mathcal{H}=\{h_e=\lambda_e+t_e\}_{e\in\mathcal{E}}$, where $\mathcal{G}=(\mathcal{V},\mathcal{E})$ is the directed graph that defines $\mathcal{H}$. 

  According to \cite[Fact~4.1]{prokaj2021piecewise}, the function 
  \begin{equation}\label{sf65}
    \Psi_{\pmb{\rho}}(\mathfrak{b},\pmb{\tau}) = \mathbf{t}
  \end{equation}
  that assigns the translation vector $\mathbf{t}$ of $\mathcal{H}$ to the translation parameters $(\mathfrak{b},\pmb{\tau})$ of $\mathcal{F}$ is a non-singular affine transformation. 
  As $\alpha(\mathcal{H})=s_{\mathcal{F}}$ by \cite[Lemma~5.1]{prokaj2021piecewise}, for $\mathcal{L}_{\vert \mathcal{E}\vert}$-almost every $\mathbf{t}\in \Psi_{\pmb{\rho}}(\mathcal{T}^{\pmb{\rho}}_{(\mathfrak{b},\pmb{\tau})})$ \eqref{sf66} holds by Theorem \ref{sf72}. 
  That is, \eqref{sf66} holds for $\mathcal{L}_{L+m}$-almost every $(\mathfrak{b}^{'},\pmb{\tau}^{'})\in \mathcal{T}^{\pmb{\rho}}_{(\mathfrak{b},\pmb{\tau})}$.
\end{proof}

\bibliographystyle{abbrv}
\bibliography{bibl_tort_vonal}

%\newpage

\end{document}